\documentclass[a4paper]{article}

\usepackage{geometry}
\usepackage{amsmath,amssymb,amsfonts,amsthm}
\usepackage{latexsym}
\usepackage{indentfirst}
\usepackage{graphicx}
\usepackage[colorlinks=true]{hyperref}
\usepackage{mathrsfs}
\usepackage{chngcntr}
\usepackage{color}

\numberwithin{equation}{section}

\newtheorem{Thm}{Theorem}[section]

\newtheorem{Lem}{Lemma}[section]
\newtheorem{Pro}{Proposition}[section]

\newtheorem{Rek}{Remark}[section]
\newtheorem{Def}{Definition}[section]

\newcommand{\R}{\mathbb{R}}

\title{Normalized solutions for a class of fractional Choquard equations with the HLS lower critical term and a nonlocal perturbation}

\author{Shaoxiong Chen, Vishvesh Kumar, Zhipeng Yang\thanks{Corresponding author:yangzhipeng326@163.com.}, Xi Zhang}

\date{}

\AtEndDocument{%
  \par
  \bigskip
  \bigskip

  \noindent
  \textbf{Shaoxiong Chen:}\\[0.2em]
  \textsc{Department of Mathematics, Yunnan Normal University, Kunming, China}\\[0.3em]
  \textit{E-mail address}: \texttt{gxmail@126.com}\\[1.5em]
  \noindent
  \textbf{Vishvesh Kumar:}\\[0.2em]
  \textsc{Department of Mathematical Sciences, Indian Institute of Technology (BHU), Varanasi, India.}\\[0.3em]
  \textit{E-mail address}: \texttt{vishvesh.mat@iitbhu.ac.in and vishveshmishra@gmail.com}\\[1.5em]
  \noindent
  \textbf{Zhipeng Yang:}\\[0.2em]
  \textsc{Department of Mathematics, Yunnan Normal University, Kunming, China}\\
  \textsc{Yunnan Key Laboratory of Modern Analytical Mathematics and Applications, Kunming, China.}\\[0.3em]
  \textit{E-mail address}: \texttt{yangzhipeng326@163.com}\\[1.5em]
  \noindent
  \textbf{Xi Zhang:}\\[0.2em]
  \textsc{Department of Mathematics, Yunnan Normal University, Kunming, China}\\[0.3em]
  \textit{E-mail address}: \texttt{1359410616@qq.com}
}

\begin{document}

\date{}
\maketitle

\begin{abstract}
In this paper, we study the mass-constrained fractional Choquard equation
\[
\left\{
\begin{array}{l}
(-\Delta)^s u
=
\lambda u
+\alpha \bigl(I_\mu * |u|^{\frac{2N-\mu}{N}}\bigr)|u|^{\frac{2N-\mu}{N}-2}u
+\bigl(I_\mu * |u|^p\bigr)|u|^{p-2}u
\quad \text{in } \R^N,\\[4pt]
\displaystyle \int_{\R^N}|u|^2\,dx=c^2>0,
\end{array}
\right.
\]
where \(N>2s\), \(s\in(0,1)\), \(\mu\in(0,N)\), \(\alpha>0\), and
\[
2+\frac{2s-\mu}{N}\le p<\frac{2N-\mu}{N-2s}.
\]
We first establish a nonexistence result in the \(L^2\)-critical case \(p=2+\frac{2s-\mu}{N}\). Then, in the \(L^2\)-supercritical range, we prove the existence of normalized ground states in two complementary regimes determined by the quantity \(\mathcal{M}_1(c)\). Our approach is based on constrained variational methods, a min-max construction, and refined estimates for the associated fiber maps.

\smallskip
\noindent \textbf{Keywords}: Fractional Choquard equation, HLS lower critical exponent, normalized solutions.

\smallskip
\noindent \textbf{MSC2020}: 35A15, 35B40, 35J20.
\end{abstract}


\section{Introduction}

In this paper, we consider the fractional Choquard problem
\begin{equation}\label{eq1.1}
\left\{
\begin{array}{l}
(-\Delta)^s u
=
\lambda u
+\alpha \bigl(I_\mu * |u|^{\frac{2N-\mu}{N}}\bigr)|u|^{\frac{2N-\mu}{N}-2}u
+\bigl(I_\mu * |u|^p\bigr)|u|^{p-2}u
\quad \text{in } \R^N,\\[4pt]
\displaystyle \int_{\R^N}|u|^2\,dx=c^2>0,
\end{array}
\right.
\end{equation}
where
\[
N>2s,\qquad s\in(0,1),\qquad \mu\in(0,N),\qquad \alpha>0,\qquad c>0,
\]
and
\[
2+\frac{2s-\mu}{N}\le p<\frac{2N-\mu}{N-2s}.
\]
Here \(\lambda\in\R\) is the Lagrange multiplier associated with the mass constraint, and
\[
(I_\mu*f)(x)=\int_{\R^N}\frac{f(y)}{|x-y|^\mu}\,dy,
\qquad x\in\R^N,
\]
denotes the Riesz-type potential used throughout the paper. The fractional Laplacian is given by
\[
(-\Delta)^s u(x)
=
C_{N,s}\,\mathrm{P.V.}\int_{\R^N}\frac{u(x)-u(y)}{|x-y|^{N+2s}}\,dy,
\]
for suitable functions \(u\), where \(C_{N,s}>0\) is a normalization constant.

Problems involving the fractional Laplacian arise in many areas, including anomalous diffusion, phase transitions, and mathematical physics. Fractional Choquard equations combine the nonlocal dispersion generated by \((-\Delta)^s\) with a Hartree-type nonlocal interaction, and have been studied extensively with respect to existence, multiplicity, regularity, and qualitative properties of solutions. We refer to
\cite{2012DiBSM,2007FrohlichCMP,2000LaskinPLA,2004ApplebaumNAM,2016Molica}
for background on the fractional Laplacian, and to
\cite{2020FilippucciNA,2001LeibAMS,2013MorozJFA,2021HeJDE,2024MengQTDS,2017MaNA,2021BihuaANA,2017BhattaraiJDE}
for general works on Choquard-type equations.

For normalized solutions, a decisive variational viewpoint is to search for critical points of the energy under the prescribed mass constraint
\[
\int_{\R^N}|u|^2\,dx=c^2,
\]
so that the frequency \(\lambda\) appears as a Lagrange multiplier. In this direction, Bartsch and Soave \cite{BartschSoave2017} introduced a natural constraint approach based on the Pohozaev identity, which has become a fundamental tool when the constrained functional is not bounded from below on the whole mass shell.

A particularly relevant line for the present paper is the theory of mixed nonlinearities. In the local case, Soave \cite{SoaveJDE2020} studied the normalized problem
\[
-\Delta u=\lambda u+\mu |u|^{q-2}u+|u|^{p-2}u
\quad \text{in }\R^N,
\qquad
\int_{\R^N}|u|^2\,dx=a^2,
\]
and established existence, nonexistence, and stability or instability properties of normalized ground states under various assumptions on \(q\), \(p\), \(a\), and \(\mu\). In particular, when
\[
2<q\le 2+\frac{4}{N}\le p<2^*,
\]
the two nonlinear terms have different characters with respect to the \(L^2\)-critical exponent, and the geometry of the constrained functional changes substantially. Soave \cite{SoaveJFA2020} also treated the Sobolev critical counterpart
\[
-\Delta u=\lambda u+\mu |u|^{q-2}u+|u|^{2^*-2}u,
\qquad
\int_{\R^N}|u|^2\,dx=a^2,
\]
and obtained a normalized analogue of the Brezis--Nirenberg picture, again showing that the interaction between two terms of different critical nature leads to delicate threshold phenomena.

For normalized Choquard equations, Bartsch, Liu, and Liu \cite{BartschLiuLiu2020} proved the existence of normalized solutions for a broad class of nonlinear Choquard equations by variational methods under the mass constraint. Cingolani and Tanaka \cite{CingolaniTanaka2021} further developed deformation arguments under a prescribed mass condition and established the existence of ground state solutions for nonlinear Choquard equations with prescribed mass. Yuan, Chen, and Tang  \cite{YuanChenTang2020} also studied generalized Choquard equations with prescribed \(L^2\)-norm and obtained positive normalized solutions by a minimax procedure.

At the Hardy--Littlewood--Sobolev lower critical level, several perturbative models have been investigated in recent years. Yao, Chen, R\u{a}dulescu, and Sun \cite{YaoChenRadulescuSun2022} studied normalized solutions for lower critical Choquard equations with a critical Sobolev perturbation,
\[
-\Delta u+\lambda u
=
\gamma (I_\alpha*|u|^{\frac{N+\alpha}{N}})|u|^{\frac{\alpha}{N}-1}u
+\mu |u|^{2^*-2}u,
\qquad
\int_{\R^N}|u|^2\,dx=c^2,
\]
and proved several existence and nonexistence results, including the doubly critical regime. Li, Bao, and Tang \cite{LiBaoTang2023} considered the lower critical Choquard equation with a local perturbation
\[
-\Delta u+\lambda u
=
\gamma (I_\alpha*|u|^{\frac{N+\alpha}{N}})|u|^{\frac{N+\alpha}{N}-2}u
+\mu |u|^{q-2}u,
\qquad
\int_{\R^N}|u|^2\,dx=c^2,
\]
and obtained existence, nonexistence, and multiplicity results together with a non-autonomous extension.

There is also a growing literature for fractional normalized Choquard equations. He, R\u{a}dulescu, and Zou \cite{HeRadulescuZou2022} studied the critical fractional Choquard equation with a local perturbation,
\[
(-\Delta)^s u
=
\lambda u+\mu |u|^{q-2}u
+(I_\alpha*|u|^{2_{\alpha,s}^*})|u|^{2_{\alpha,s}^*-2}u,
\qquad
\int_{\R^N}|u|^2\,dx=a^2,
\]
and established several existence and nonexistence results for normalized ground states under \(L^2\)-subcritical, \(L^2\)-critical, and \(L^2\)-supercritical perturbations. Yu, Tang, and Zhang \cite{YuTangZhang2023} investigated the lower critical fractional Choquard equation with a focusing local perturbation,
\[
(-\Delta)^s u
=
\lambda u+\gamma (I_\alpha*|u|^{1+\frac{\alpha}{N}})|u|^{\frac{\alpha}{N}-1}u
+\mu |u|^{q-2}u,
\qquad
\int_{\R^N}|u|^2\,dx=a^2,
\]
and proved nonexistence, existence, and symmetry of normalized ground states in different perturbative regimes.

The present problem is doubly nonlocal and mixed in a different sense. We consider
\[
(-\Delta)^s u
=
\lambda u
+\alpha \bigl(I_\mu * |u|^{\frac{2N-\mu}{N}}\bigr)|u|^{\frac{2N-\mu}{N}-2}u
+\bigl(I_\mu * |u|^p\bigr)|u|^{p-2}u
\quad \text{in } \R^N,
\qquad
\int_{\R^N}|u|^2\,dx=c^2,
\]
where the first Hartree term is exactly of Hardy--Littlewood--Sobolev lower critical type, while the second Hartree term has exponent \(p\). Thus both nonlinearities are nonlocal, but they play different roles with respect to the mass-preserving scaling. In this sense, the problem is closer in spirit to the mixed-nonlinearity theory developed by Soave, but now in a fractional Choquard framework where both nonlinear terms are of Hartree type and one of them is exactly Hardy--Littlewood--Sobolev lower critical.

The present problem contains the Hardy--Littlewood--Sobolev lower critical exponent
\[
2_{\mu,*}=\frac{2N-\mu}{N},
\]
and the upper Hardy--Littlewood--Sobolev critical exponent
\[
2_{\mu,s}^*=\frac{2N-\mu}{N-2s}.
\]
The lower critical term is accompanied by an additional nonlocal perturbation of order \(p\). Let
\[
S_c=\left\{u\in H^s(\R^N): \|u\|_2^2=c^2\right\}.
\]
The associated constrained energy functional is
\begin{equation}\label{eq1.2}
J_\alpha(u)
=
\frac{1}{2}\|(-\Delta)^{s/2}u\|_2^2
-\frac{\alpha}{2\,2_{\mu,*}}\int_{\R^N}\bigl(I_\mu*|u|^{2_{\mu,*}}\bigr)|u|^{2_{\mu,*}}\,dx
-\frac{1}{2p}\int_{\R^N}\bigl(I_\mu*|u|^p\bigr)|u|^p\,dx,
\end{equation}
defined on \(S_c\).

To describe the scaling properties, for \(u\in S_c\) and \(t\in\R\), we set
\[
u^t(x)=e^{\frac{N}{2}t}u(e^t x).
\]
Then \(u^t\in S_c\), and the lower critical term is invariant under this mass-preserving scaling. Moreover, if we define
\begin{equation}\label{eq1.3}
\gamma_{p,s}=\frac{N(p-2)+\mu}{2sp},
\end{equation}
then
\[
\int_{\R^N}\bigl(I_\mu*|u^t|^p\bigr)|u^t|^p\,dx
=
e^{2p\gamma_{p,s}st}
\int_{\R^N}\bigl(I_\mu*|u|^p\bigr)|u|^p\,dx.
\]
Therefore, we conclude that
\[
2+\frac{2s-\mu}{N}
\]
is exactly the \(L^2\)-critical exponent for the perturbation term, while the regime
\[
2+\frac{2s-\mu}{N}<p<\frac{2N-\mu}{N-2s}
\]
is \(L^2\)-supercritical and HLS-subcritical.

For later use, we introduce
\begin{equation}\label{eq1.4}
\mathcal{M}_1(c)
=
c^{\frac{2p(1-\gamma_{p,s})}{p\gamma_{p,s}-1}}.
\end{equation}
This quantity will determine the two existence regimes in our main results.

Our goal is to prove existence of normalized ground states for \eqref{eq1.1} in the \(L^2\)-supercritical and HLS-subcritical range
\[
2+\frac{2s-\mu}{N}<p<\frac{2N-\mu}{N-2s},
\]
and to show that two complementary existence regimes are determined by the size of \(\mathcal{M}_1(c)\). We also establish a nonexistence result in the \(L^2\)-critical case
\[
p=2+\frac{2s-\mu}{N}.
\]
From the mathematical viewpoint, our contribution may be viewed as a nonlocal counterpart of the mixed-nonlinearity theory for normalized Schr\"odinger equations, but now in a fractional Choquard setting where both nonlinear terms are Hartree-type and one of them is exactly Hardy--Littlewood--Sobolev lower critical.

We now state our main results.

\begin{Thm}\label{Thm1.1}
Assume that
\[
p=2+\frac{2s-\mu}{N}.
\]
If
\[
1-\frac{N C_p\, c^{2\left(\frac{2s-\mu}{N}+1\right)}}{2N+2s-\mu}>0,
\]
where \(C_p\) is the constant given in Lemma~\ref{Lem2.2}, then \(J_\alpha\) has no critical point on \(S_c\). Consequently, problem \eqref{eq1.1} admits no normalized solution.
\end{Thm}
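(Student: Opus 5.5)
Suppose $u\in S_c$ is a critical point of $J_\alpha$ restricted to $S_c$. The plan is to combine the Pohozaev identity with the Gagliardo--Nirenberg-type inequality of Lemma~\ref{Lem2.2} and force $u=0$, which contradicts $\|u\|_2=c>0$.

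\textbf{Step 1: the Pohozaev constraint.} Any constrained critical point satisfies $P(u)=0$, where $P(u)=\frac{d}{dt}J_\alpha(u^t)\big|_{t=0}$ for the mass-preserving scaling $u^t$. This follows either from the Pohozaev identity for \eqref{eq1.1} combined with the Nehari identity obtained by testing with $u$, or from the fact that $J_\alpha$ is invariant to first order along the curve $t\mapsto u^t\subset S_c$ at a critical point. The lower critical term is invariant under $u\mapsto u^t$, so it drops out. The kinetic term scales like $e^{2st}$, and by \eqref{eq1.3} the $p$-term scales like $e^{2p\gamma_{p,s}st}$. Hence
\[
P(u)=s\|(-\Delta)^{s/2}u\|_2^2-\gamma_{p,s}s\int_{\R^N}\bigl(I_\mu*|u|^p\bigr)|u|^p\,dx=0 .
\]
For $p=2+\frac{2s-\mu}{N}$ we have $p\gamma_{p,s}=1$, which gives
\[
\|(-\Delta)^{s/2}u\|_2^2=\frac1p\int_{\R^N}(I_\mu*|u|^p)|u|^p\,dx .
\]

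\textbf{Step 2: insert Lemma~\ref{Lem2.2}.} I expect Lemma~\ref{Lem2.2} to give
\[
\int_{\R^N}(I_\mu*|u|^p)|u|^p\,dx\le C_p\|(-\Delta)^{s/2}u\|_2^{2p\gamma_{p,s}}\|u\|_2^{2p(1-\gamma_{p,s})}.
\]
At the critical exponent this reads $C_p\|(-\Delta)^{s/2}u\|_2^2\,c^{2p-2}$. Since $p-1=\frac{2s-\mu}{N}+1$ and $\frac1p=\frac{N}{2N+2s-\mu}$, the Pohozaev identity then yields
\[
\Bigl(1-\frac{N C_p\, c^{2(\frac{2s-\mu}{N}+1)}}{2N+2s-\mu}\Bigr)\|(-\Delta)^{s/2}u\|_2^2\le 0 .
\]
Under the hypothesis of Theorem~\ref{Thm1.1} the coefficient is positive, so $\|(-\Delta)^{s/2}u\|_2=0$. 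Then $u$ is constant, and being in $L^2$ it vanishes, contradicting $u\in S_c$. The nonexistence of normalized solutions follows immediately.

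\textbf{Main obstacle.} The step needing the most care is justifying $P(u)=0$ for an arbitrary $H^s$ critical point. The route through the Pohozaev identity requires regularity of solutions of \eqref{eq1.1}. The alternative uses the Lagrange multiplier rule along the curve $u^t$, which requires differentiability of $t\mapsto J_\alpha(u^t)$. That differentiability is routine here, since every term scales explicitly. A secondary check is that the normalization of $C_p$ in Lemma~\ref{Lem2.2} matches the exponents used above, so that the stated constant $\frac{N C_p}{2N+2s-\mu}$ appears exactly.
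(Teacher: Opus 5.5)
Your proposal is correct and is essentially the paper's argument. The paper writes $E_u'(t)=se^{2st}\bigl(\|u\|^2-\gamma_{p,s}\int_{\R^N}(I_\mu*|u|^p)|u|^p\,dx\bigr)$, uses Lemma~\ref{Lem2.2} with $p\gamma_{p,s}=1$ to get $E_u'(t)\ge se^{2st}\|u\|^2\bigl(1-\frac{NC_p}{2N+2s-\mu}c^{2(\frac{2s-\mu}{N}+1)}\bigr)>0$, and contradicts $E_u'(0)=0$ at a constrained critical point; this is your Steps 1--2 in a different order, and you additionally make explicit why $\|u\|>0$ on $S_c$.

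One small correction to your ``main obstacle'' remark: the curve argument does not really hinge on differentiability of the explicit real function $t\mapsto J_\alpha(t\star u)$. It needs a chain rule through $t\mapsto t\star u$, and that map need not be differentiable into $H^s(\R^N)$, since its formal derivative $\frac N2u+x\cdot\nabla u$ need not lie in $H^s$. The paper passes over this point in exactly the same way (Remark~\ref{Rek2.2} and the proof of Theorem~\ref{Thm1.1}), so it is not a discrepancy with its proof.
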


\begin{Rek}\label{Rek1.1}
Theorem~\ref{Thm1.1} yields a nonexistence result in the \(L^2\)-critical case under an explicit small-mass condition. The existence or nonexistence of normalized solutions outside this regime is left open in the present paper.
\end{Rek}

\begin{Thm}\label{Thm1.2}
Assume that
\[
N>2s,
\qquad
2+\frac{2s-\mu}{N}<p<\frac{2N-\mu}{N-2s},
\qquad
\alpha>0,
\qquad
c>0.
\]
Then there exists a constant \(\delta>0\), depending only on the fixed parameters of the problem, such that if
\[
\mathcal{M}_1(c)<\delta,
\]
then problem \eqref{eq1.1} admits a normalized ground state solution \((u_c,\lambda_c)\). Moreover, \(u_c\) can be chosen nonnegative and radially symmetric, and
\[
-\lambda_c\sim c^{\frac{2(1-p)}{p\gamma_{p,s}-1}}.
\]
\end{Thm}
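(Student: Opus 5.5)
We work with the fiber maps. For $u\in S_c$ write $A(u)=\|(-\Delta)^{s/2}u\|_2^2$, $B(u)=\int(I_\mu*|u|^{2_{\mu,*}})|u|^{2_{\mu,*}}$ and $C(u)=\int(I_\mu*|u|^p)|u|^p$. Then
\[
\Psi_u(t):=J_\alpha(u^t)=\tfrac12 e^{2st}A(u)-\tfrac{\alpha}{2\,2_{\mu,*}}B(u)-\tfrac1{2p}e^{2p\gamma_{p,s}st}C(u).
\]
Since $p\gamma_{p,s}>1$ in the supercritical range, $\Psi_u$ has exactly one critical point $t_u$, and it is a strict global maximum. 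The lower critical term does not depend on $t$. Let
\[
\mathcal P_c=\{u\in S_c:\ A(u)=\gamma_{p,s}C(u)\}
\]
be the Pohozaev manifold. It is a natural constraint, because $\Psi_u''(t_u)<0$ makes it nondegenerate, so a Lagrange-multiplier argument in the style of Bartsch--Soave applies. Every critical point of $J_\alpha|_{S_c}$ lies in $\mathcal P_c$. Set $m(c)=\inf_{\mathcal P_c}J_\alpha=\inf_{u\in S_c}\max_t J_\alpha(u^t)$. On $\mathcal P_c$,
\[
J_\alpha(u)=\Bigl(\tfrac12-\tfrac1{2p\gamma_{p,s}}\Bigr)A(u)-\tfrac{\alpha}{2\,2_{\mu,*}}B(u).
\]

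Step 1 gives lower bounds. Using the Gagliardo--Nirenberg/HLS inequality of Lemma 2.2, $C(u)\le C_p A^{p\gamma_{p,s}}c^{2p(1-\gamma_{p,s})}$, the Pohozaev identity yields $A(u)\ge (\gamma_{p,s}C_p)^{-1/(p\gamma_{p,s}-1)}\mathcal M_1(c)^{-1}$. Hence $A$ is large on $\mathcal P_c$ exactly when $\mathcal M_1(c)$ is small. By HLS, the lower critical term satisfies $B(u)\le S_\mu^{-1}c^{2\cdot 2_{\mu,*}}$, which is bounded independently of $A$. Therefore
\[
m(c)\ge \kappa\,\mathcal M_1(c)^{-1}-\tfrac{\alpha}{2\,2_{\mu,*}}S_\mu^{-1}c^{2(2N-\mu)/N},
\]
with $\kappa>0$ explicit. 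The quantity $\delta$ is chosen so that this first term dominates the mass-dependent constant, or more precisely dominates the level at which vanishing or dichotomy could occur.

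Step 2 constructs a bounded Palais--Smale sequence. We work in the radial subspace: Schwarz symmetrization does not increase $A$ and does not decrease $B$ or $C$ (Riesz rearrangement), so $m(c)$ is attained in the infimum over nonnegative radial functions. Applying Ghoussoub's minimax principle to the augmented functional $\tilde J(u,t)=J_\alpha(u^t)$ on $S_{c,r}\times\R$ (Jeanjean's trick) gives a PS sequence $u_n$ at level $m(c)$ with $P(u_n)\to0$ and $u_n^-\to0$. The expression for $J_\alpha$ on $\mathcal P_c$ gives boundedness of $A(u_n)$, and the Lagrange multipliers $\lambda_n$ are bounded. Furthermore,
\[
\lambda_n c^2=A(u_n)-\alpha B(u_n)-C(u_n)+o(1)=-(\tfrac1{\gamma_{p,s}}-1)A(u_n)-\alpha B(u_n)+o(1)<0.
\]
This also yields $-\lambda_c\sim A/c^2\sim \mathcal M_1(c)^{-1}c^{-2}$, and a direct exponent check shows that this equals $c^{2(1-p)/(p\gamma_{p,s}-1)}$. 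The upper bound on $A$ comes from comparing with a test-function estimate for $m(c)$ via scaling.

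Step 3, compactness, is the main obstacle. Radial compactness (Strauss) gives convergence of $C(u_n)$, since $p$ is strictly inside the HLS range. It does not give convergence of $B(u_n)$, because $2_{\mu,*}$ is the lower endpoint and mass can escape by spreading (vanishing at scale $\to\infty$). Let $u_n\rightharpoonup u$. From $\lambda_n\to\lambda<0$ we get $(-\Delta)^su-\lambda u=\ldots$, and we would show that $u\neq0$. Indeed, if $u=0$ then $C(u_n)\to0$, hence $A(u_n)\to0$ by the Pohozaev identity, contradicting the lower bound of Step 1. To get strong $H^s$ convergence we would test the equation difference with $u_n-u$. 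Because $\lambda<0$, the quadratic form $A-\lambda\|\cdot\|_2^2$ is coercive. The remaining issue is the lower critical term: we would use a Brezis--Lieb splitting for $B$ together with the inequality $\alpha B(v)\le \alpha S_\mu^{-1}\|v\|_2^{2\cdot 2_{\mu,*}}$. The smallness condition $\mathcal M_1(c)<\delta$ is used once more here. It ensures $|\lambda|$ is large compared with $\alpha S_\mu^{-1}c^{2(2_{\mu,*}-1)}$, so the lower critical contribution from the remainder $v_n=u_n-u$ is absorbed by $-\lambda\|v_n\|_2^2$, which forces $v_n\to0$. Then $u\in S_c$ is nonnegative and radial, attains $m(c)$, and is a ground state because $\mathcal P_c$ contains all critical points. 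If the absorption constant fails to be explicit, we would first try the alternative of proving strict monotonicity $m(c)<m(c')$ for $c'<c$ to rule out mass loss.
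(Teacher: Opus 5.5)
Your proposal is correct. Its skeleton is the paper's: fibering maps, the Pohozaev manifold as a natural constraint, a radial Palais--Smale sequence at level $m_1(c,\alpha)$, $\lambda_n\to\lambda_c<0$, and nonvanishing via $\|u\|^2\gtrsim c^{a}$ on $\mathfrak{P}_{\alpha,c}$ (with $a$, $b$ as in Section 4).

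The genuine difference is how you recover compactness for the lower critical term. The paper proves Lemma~\ref{Lem3.7}. With $w_n=u_n-u_c$, both $-\lambda_c\|w_n\|_2^2$ and $\alpha\int_{\R^N}(I_\mu*|w_n|^{\frac{2N-\mu}{N}})|w_n|^{\frac{2N-\mu}{N}}dx$ tend to some $l$, and either $l=0$ or $l\ge\alpha^{-\frac{N}{N-\mu}}(-\lambda_cS_\mu)^{\frac{2N-\mu}{N-\mu}}$. The second case is excluded by an energy comparison with the Nehari level $m_2(c,\alpha)$ of the action functional. Lemma~\ref{Lem4.2} then has to prove a uniform bound $m_2(c,\alpha)\ge\eta>0$ and compare the threshold $\sim c^{(a-2)\frac{2N-\mu}{N-\mu}}$ with $m_1(c,\alpha)-\frac{\lambda_c}{2}c^2\lesssim c^a$.

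You use only the mass of the remainder, $\|v_n\|_2^2\le c^2+o_n(1)$, to get
\[
\|v_n\|^2+(-\lambda_c)\|v_n\|_2^2\le \alpha S_\mu^{-\frac{2N-\mu}{N}}c^{\frac{2(N-\mu)}{N}}\|v_n\|_2^2+o_n(1).
\]
This forces $v_n\to0$ as soon as $-\lambda_cc^2>\alpha S_\mu^{-\frac{2N-\mu}{N}}c^{b}$. Since $-\lambda_cc^2\ge(\gamma_{p,s}^{-1}-1)\liminf_n\|u_n\|^2\gtrsim c^a$ and $a<0<b$, this holds for small $c$. In the paper's notation, your observation is that $0<l\le-\lambda_cc^2$ is incompatible with the lower bound on $l$.

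Your route has several advantages:
\begin{itemize}
\item it is shorter and yields an explicit $\delta$;
\item it needs only the lower bound on $\|u\|^2$ over $\mathfrak{P}_{\alpha,c}$ (the upper bound enters only in the two-sided asymptotics of $\lambda_c$);
\item it dispenses with $\mathcal N_c$ and $m_2(c,\alpha)$ entirely.
\end{itemize}
The paper's criterion, by contrast, is reused for the large-mass Theorem~\ref{Thm1.3}. There $-\lambda_cc^2$ and $\alpha S_\mu^{-\frac{2N-\mu}{N}}c^b$ agree to leading order, so your absorption has no guaranteed margin, and the slack $m_2\ge\eta$ is what the paper relies on.

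Two small points:
\begin{enumerate}
\item Jeanjean's augmented functional gives only $P_\alpha(u_n)\to0$, so you must separately exclude $\|u_n\|\to0$. Your Step~1 bound $m_1(c,\alpha)\ge\kappa c^a-A_\mu c^b>0$ for small $c$ does this. The paper avoids the issue by working with $\widehat{J}_\alpha(u)=J_\alpha(t_u\star u)$, so that $u_n\in\mathfrak{P}_{\alpha,c}$ exactly.
\item In the paper's normalization, the lower critical HLS constant is $S_\mu^{-\frac{2N-\mu}{N}}$ rather than $S_\mu^{-1}$.
\end{enumerate}
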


\begin{Thm}\label{Thm1.3}
Assume that
\[
N>2s,
\qquad
2+\frac{2s-\mu}{N}<p<\frac{2N-\mu}{N-2s},
\qquad
\alpha>0,
\qquad
c>0.
\]
Then there exists a constant \(\tau>0\), depending only on the fixed parameters of the problem, such that if
\[
\mathcal{M}_1(c)>\tau,
\]
then problem \eqref{eq1.1} admits a normalized ground state solution \((u_c,\lambda_c)\). Moreover, \(u_c\) can be chosen nonnegative and radially symmetric. In addition,
\[
\left|-\lambda_c c^2-\alpha S_\mu^{-\frac{2N-\mu}{N}}c^{\frac{2(2N-\mu)}{N}}\right|
\lesssim
c^{\frac{2(p\gamma_{p,s}-p)}{p\gamma_{p,s}-1}},
\]
and
\[
m_1(c,\alpha)
+\frac{N\alpha}{2(2N-\mu)}S_\mu^{-\frac{2N-\mu}{N}}c^{\frac{2(2N-\mu)}{N}}
\sim
c^{\frac{2(p\gamma_{p,s}-p)}{p\gamma_{p,s}-1}},
\]
where \(m_1(c,\alpha)\) denotes the ground-state energy level defined later.
\end{Thm}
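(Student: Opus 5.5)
The plan is to minimize $J_\alpha$ on the Pohozaev set, compare the ground-state level with the scaling-invariant HLS lower critical baseline, and recover compactness by a Lagrange-multiplier comparison that uses the largeness of $\mathcal{M}_1(c)$.

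\textbf{Pohozaev set and fiber maps.} Since $\gamma_{p,s}<1<p\gamma_{p,s}$ in this range, $\mathcal{M}_1(c)>\tau$ is the same as $c$ being large. Along $u^t$ the fiber map is
\[
\Psi_u(t)=\tfrac{e^{2st}}{2}A(u)-\tfrac{\alpha}{2\,2_{\mu,*}}B(u)-\tfrac{e^{2p\gamma_{p,s}st}}{2p}C(u),
\]
where $A$ is the Gagliardo seminorm, $B$ the lower critical Hartree term and $C$ the $p$-term. The $B$-term is constant in $t$ and $2p\gamma_{p,s}s>2s$. Hence $\Psi_u$ has a unique critical point $t_u$, which is a strict global maximum. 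So the Pohozaev set
\[
\mathcal{P}_c=\{u\in S_c:\ A(u)=\gamma_{p,s}C(u)\}
\]
is a natural constraint in the sense of Bartsch--Soave, every $u\in S_c$ projects onto it, and $u\mapsto t_u$ is $C^1$. I set $m_1(c,\alpha)=\inf_{\mathcal{P}_c}J_\alpha$. On $\mathcal{P}_c$ one has
\[
J_\alpha(u)=\Bigl(\tfrac12-\tfrac{1}{2p\gamma_{p,s}}\Bigr)A(u)-\tfrac{\alpha}{2\,2_{\mu,*}}B(u).
\]
By the sharp lower critical inequality $B(u)\le S_\mu^{-\frac{2N-\mu}{N}}c^{\frac{2(2N-\mu)}{N}}$, this gives $m_1(c,\alpha)+\frac{N\alpha}{2(2N-\mu)}S_\mu^{-\frac{2N-\mu}{N}}c^{\frac{2(2N-\mu)}{N}}\ge0$.

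\textbf{Two-sided energy estimate.} I would prove the stated two-sided estimate for $m_1+\frac{N\alpha}{2(2N-\mu)}S_\mu^{-\frac{2N-\mu}{N}}c^{\frac{2(2N-\mu)}{N}}$.
\begin{itemize}
\item For the upper bound, take the HLS lower critical extremal $U_\varepsilon(x)=A_0\bigl(\varepsilon/(\varepsilon^2+|x|^2)\bigr)^{N/2}$, normalized in $S_c$; it lies in $H^s$ for $s<1$. It achieves equality in the $B$-inequality for every $\varepsilon$. Projecting it to $\mathcal{P}_c$ and optimizing the remaining terms $\max_t\bigl(\tfrac{e^{2st}}2A-\tfrac{e^{2p\gamma st}}{2p}C\bigr)$ gives a quantity of order $A^{\frac{p\gamma}{p\gamma-1}}C^{-\frac{1}{p\gamma-1}}$. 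Computing its $c$-dependence yields $c^{\frac{2(p\gamma_{p,s}-p)}{p\gamma_{p,s}-1}}$.
\item For the lower bound, use the fractional Gagliardo--Nirenberg--HLS inequality $C(u)\le C_p A(u)^{p\gamma_{p,s}}c^{2p(1-\gamma_{p,s})}$ from Lemma~\ref{Lem2.2} on $\mathcal{P}_c$. This bounds $A(u)$ below by $c^{-\frac{2p(1-\gamma)}{p\gamma-1}}$, and the same rate follows after adding the $B$-deficit, which is nonnegative.
\end{itemize}
In particular this shows that, relative to $c^{\frac{2(2N-\mu)}{N}}$, the correction is negligible when $\mathcal{M}_1(c)$ is large.

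\textbf{Minimizing sequence and multiplier.} Using Schwarz symmetrization (the kinetic term decreases, Riesz rearrangement increases $B$ and $C$) together with projection to $\mathcal{P}_c$, I take a radial nonnegative minimizing sequence. By a Ghoussoub-type min-max argument on $\mathcal{P}_c$, I upgrade it to a Palais--Smale sequence $(u_n)\subset\mathcal{P}_c$ with Lagrange multipliers $\lambda_n$. Boundedness in $H^s$ follows from the identity on $\mathcal{P}_c$. Since the exponent $2Np/(2N-\mu)$ lies in $(2,2_s^*)$, the compact radial embedding gives $C(u_n)\to C(u)$. If the weak limit were $u=0$, the Pohozaev relation would force $A(u_n)\to0$, contradicting the lower bound on $A$ over $\mathcal{P}_c$; so $u\ne0$. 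Testing the equation with $u_n$ and using the Pohozaev identity gives
\[
-\lambda_n c^2=\alpha B(u_n)+(1-\gamma_{p,s})C(u_n).
\]
Combined with the energy estimate this yields the stated bound on $\bigl|-\lambda_c c^2-\alpha S_\mu^{-\frac{2N-\mu}{N}}c^{\frac{2(2N-\mu)}{N}}\bigr|$, and in particular $\lambda_c<0$.

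\textbf{Main obstacle: no mass loss.} The lower critical term is not compact, because its critical Lebesgue exponent is exactly $2$. Set $v_n=u_n-u$ with $\|v_n\|_2^2\to d^2$, where $d^2=c^2-\|u\|_2^2<c^2$. Subtracting the limit equation, using Brezis--Lieb splitting for Hartree terms, and using $C(v_n)\to0$ leads to
\[
A(v_n)-\lambda_c\|v_n\|_2^2\le\alpha S_\mu^{-\frac{2N-\mu}{N}}d^{\frac{2(2N-\mu)}{N}-2}\|v_n\|_2^2+o(1).
\]
By the multiplier estimate, $-\lambda_c\ge\alpha S_\mu^{-\frac{2N-\mu}{N}}c^{\frac{2(N-\mu)}{N}}\bigl(1-O(\text{small})\bigr)$. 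Since $d<c$ and the error is small precisely when $\mathcal{M}_1(c)>\tau$, I expect $-\lambda_c$ to strictly beat the right-hand coefficient. The delicate point is that this must be quantitative: one needs either $d$ bounded away from $c$, which follows from $u\neq0$ with a uniform lower bound on its mass, or a direct comparison $m_1(c)<m_1(c_1)+{}$(baseline difference) for $c_1<c$. I would try the direct comparison first if the uniform bound is not available. Either route forces $\|v_n\|_2\to0$ and $A(v_n)\to0$. Then $u\in\mathcal{P}_c$ attains $m_1$, so $u_c=u$ is a nonnegative radial ground state.
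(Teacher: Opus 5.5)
\textbf{The gap is in the no-mass-loss step.} Up to and including the multiplier asymptotics, your plan coincides with the paper (Lemmas~\ref{Lem3.1}, \ref{Lem3.5}, \ref{Lem3.6}, \ref{Lem4.1} and the first half of Lemma~\ref{Lem4.3}). The no-mass-loss step is the heart of the theorem, and you leave it open.

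In your notation, your inequality excludes $d>0$ only if $-\lambda_c>\alpha S_\mu^{-\frac{2N-\mu}{N}}d^{\frac{2(N-\mu)}{N}}$. The multiplier estimate only gives $-\lambda_c\ge\alpha S_\mu^{-\frac{2N-\mu}{N}}c^{\frac{2(N-\mu)}{N}}\bigl(1-Kc^{a-b}\bigr)$, with $a=\frac{2(p\gamma_{p,s}-p)}{p\gamma_{p,s}-1}<0<b=\frac{2(2N-\mu)}{N}$. Since $d^2=c^2-\|u\|_2^2$, you need a lower bound $\|u\|_2^2\gg c^{2+a-b}$ with constants independent of $c$. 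Knowing $u\neq0$ gives nothing of this kind. As written, you cannot rule out that almost all of the mass escapes through the non-compact lower critical term, which is exactly the scenario the theorem must exclude. ``I expect'' and an unspecified subadditivity comparison do not close this.

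\textbf{How to close it with tools you already use.} On $\mathcal P_c$ you have $A(u_n)\ge K_0c^a$ by Lemma~\ref{Lem3.1}(3). Your energy upper bound gives $A(u_n)\le K_1c^a$, because both summands in
$J_\alpha(u_n)+\frac{N\alpha}{2(2N-\mu)}S_\mu^{-\frac{2N-\mu}{N}}c^b=\bigl(\frac12-\frac{1}{2p\gamma_{p,s}}\bigr)A(u_n)+\frac{N\alpha}{2(2N-\mu)}\bigl(S_\mu^{-\frac{2N-\mu}{N}}c^b-B(u_n)\bigr)$
are nonnegative. Since $C(u_n)=A(u_n)/\gamma_{p,s}$, radial compactness gives $C(u)=\lim C(u_n)\ge K_0c^a/\gamma_{p,s}$. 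Weak lower semicontinuity gives $A(u)\le K_1c^a$.

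Applying Lemma~\ref{Lem2.2} to $u$ then yields $K_0c^a/\gamma_{p,s}\le C_pK_1^{p\gamma_{p,s}}c^{ap\gamma_{p,s}}\|u\|_2^{2p(1-\gamma_{p,s})}$. Because $a(1-p\gamma_{p,s})=2p(1-\gamma_{p,s})$, this gives $\|u\|_2^2\ge\kappa c^2$ with $\kappa\in(0,1]$ independent of $c$. Hence $d^{\frac{2(N-\mu)}{N}}\le(1-\kappa)^{\frac{N-\mu}{N}}c^{\frac{2(N-\mu)}{N}}$. Since $1-Kc^{a-b}>(1-\kappa)^{\frac{N-\mu}{N}}$ for $c$ large, your argument is finished, and the subadditivity route is unnecessary.

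\textbf{Comparison with the paper.} The paper never bounds the mass of the weak limit. Lemma~\ref{Lem3.7} shows that mass loss $l=\lim(-\lambda_c)\|w_n\|_2^2>0$ forces $l\ge\alpha^{-\frac{N}{N-\mu}}(-\lambda_cS_\mu)^{\frac{2N-\mu}{N-\mu}}$ and $m_1(c,\alpha)-\frac{\lambda_c}{2}c^2\ge m_2(c,\alpha)+\frac{N-\mu}{2(2N-\mu)}l$, where $m_2$ is the Nehari level of the action functional at frequency $\lambda_c$. Lemma~\ref{Lem4.3} then shows two things. First, $m_2\ge\eta>0$ uniformly once $-\lambda_c\ge1$. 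Second, $m_1(c,\alpha)-\frac{\lambda_c}{2}c^2$ and $\frac{N-\mu}{2(2N-\mu)}\alpha^{-\frac{N}{N-\mu}}(-\lambda_cS_\mu)^{\frac{2N-\mu}{N-\mu}}$ differ only by $O(c^a)\to0$. Completed as above, your route replaces that uniform Nehari-level bound with a Gagliardo--Nirenberg mass bound on the limit, and is somewhat more elementary.
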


The remainder of the paper is organized as follows. In Section~2, we introduce the variational framework, recall several preliminary results, and prove the nonexistence result in the \(L^2\)-critical case. In Section~3, we study the fibering maps, establish the minimax characterization of the ground-state level, and derive the compactness properties of the associated Palais--Smale sequences. In Section~4, we obtain the key asymptotic estimates for the energy level and the Lagrange multiplier, and then prove the existence of normalized ground states in the two complementary regimes described in Theorems~\ref{Thm1.2} and \ref{Thm1.3}.

\section{Preliminaries}

This section is devoted to presenting the variational framework and basic tools used in the sequel. We first recall the functional setting and the notion of weak solution to \eqref{eq1.1}.

For \(s\in(0,1)\), the fractional Sobolev space \(H^s(\R^N)\) is defined by
\[
\begin{aligned}
H^s(\R^N)
&=
\Bigl\{
u\in L^2(\R^N):
\frac{u(x)-u(y)}{|x-y|^{\frac{N}{2}+s}}
\in L^2(\R^N\times\R^N)
\Bigr\} \\
&=
\Bigl\{
u\in L^2(\R^N):
\int_{\R^N}(1+|\xi|^{2s})\,|\mathcal{F}(u)(\xi)|^2\,d\xi<\infty
\Bigr\},
\end{aligned}
\]
where \(\mathcal{F}(u)\) denotes the Fourier transform of \(u\). The norm in \(H^s(\R^N)\) is given by
\[
\|u\|_{H^s(\R^N)}
=
\left(
\int_{\R^N}\int_{\R^N}
\frac{|u(x)-u(y)|^2}{|x-y|^{N+2s}}\,dx\,dy
+\int_{\R^N}|u|^2\,dx
\right)^{1/2}.
\]
Moreover, by Propositions 3.4 and 3.6 in \cite{2012DiBSM},
\[
\int_{\R^N}\bigl|(-\Delta)^{\frac{s}{2}}u\bigr|^2\,dx
=
\int_{\R^N}|\xi|^{2s}|\mathcal{F}(u)(\xi)|^2\,d\xi
=
\frac{1}{2}C_{N,s}
\int_{\R^N}\int_{\R^N}
\frac{|u(x)-u(y)|^2}{|x-y|^{N+2s}}\,dx\,dy,
\]
where \(C_{N,s}>0\) depends only on \(N\) and \(s\). Hence we will often use the equivalent norm
\[
\|u\|_{H^s(\R^N)}
=
\left(
\int_{\R^N}|u|^2\,dx
+
\int_{\R^N}\bigl|(-\Delta)^{\frac{s}{2}}u\bigr|^2\,dx
\right)^{1/2}.
\]

We also introduce the homogeneous fractional Sobolev space
\[
\mathcal{D}^{s,2}(\R^N)
=
\Bigl\{
u\in L^{2_s^*}(\R^N):
\int_{\R^N}\bigl|(-\Delta)^{\frac{s}{2}}u\bigr|^2\,dx<\infty
\Bigr\},
\qquad
2_s^*=\frac{2N}{N-2s},
\]
equipped with the norm
\[
\|u\|
=
\left(
\int_{\R^N}\bigl|(-\Delta)^{\frac{s}{2}}u\bigr|^2\,dx
\right)^{1/2}.
\]
In what follows, \(\|\cdot\|\) always denotes this homogeneous norm, while \(\|\cdot\|_{H^s}\) denotes the full \(H^s\)-norm.

We define
\[
H_{\mathrm{rad}}^s(\R^N)
=
\{u\in H^s(\R^N): u \text{ is radial}\},
\qquad
S_{c,\mathrm{rad}}=S_c\cap H_{\mathrm{rad}}^s(\R^N).
\]

\begin{Def}\label{Def2.1}
A function \(u\in H^s(\R^N)\) is called a weak solution of \eqref{eq1.1} if \(u\in S_c\) and there exists \(\lambda\in\R\) such that
\begin{equation}\label{eq2.1}
\begin{aligned}
\int_{\R^N}(-\Delta)^{\frac{s}{2}}u\,(-\Delta)^{\frac{s}{2}}v\,dx
&=
\lambda\int_{\R^N}uv\,dx
+\alpha\int_{\R^N}\bigl(I_\mu*|u|^{\frac{2N-\mu}{N}}\bigr)|u|^{\frac{2N-\mu}{N}-2}uv\,dx \\
&\quad
+\int_{\R^N}\bigl(I_\mu*|u|^p\bigr)|u|^{p-2}uv\,dx,
\qquad \forall v\in H^s(\R^N).
\end{aligned}
\end{equation}
\end{Def}

The associated energy functional \(J_\alpha:H^s(\R^N)\to\R\) is
\begin{equation}\label{eq2.2}
J_\alpha(u)
=
\frac{1}{2}\|u\|^2
-\frac{\alpha N}{2(2N-\mu)}
\int_{\R^N}\bigl(I_\mu*|u|^{\frac{2N-\mu}{N}}\bigr)|u|^{\frac{2N-\mu}{N}}\,dx
-\frac{1}{2p}
\int_{\R^N}\bigl(I_\mu*|u|^p\bigr)|u|^p\,dx.
\end{equation}

We also introduce the Pohozaev functional
\[
P_\alpha(u)
=
s\|u\|^2
-s\gamma_{p,s}
\int_{\R^N}\bigl(I_\mu*|u|^p\bigr)|u|^p\,dx,
\]
where
\[
\gamma_{p,s}=\frac{N(p-2)+\mu}{2ps}.
\]
The Pohozaev manifold associated with \(J_\alpha\) at mass \(c\) is defined by
\[
\mathfrak{P}_{\alpha,c}
=
\{u\in S_c: P_\alpha(u)=0\}.
\]

For \(u\in S_c\) and \(t\in\R\), we introduce the mass-preserving scaling
\[
(t\star u)(x)=e^{\frac{Nt}{2}}u(e^t x),
\qquad x\in\R^N.
\]
Then \(t\star u\in S_c\) for all \(t\in\R\). The associated fibering map is
\[
\begin{aligned}
E_u(t)
&=J_\alpha(t\star u) \\
&=
\frac{e^{2st}}{2}\|u\|^2
-\frac{\alpha N}{2(2N-\mu)}
\int_{\R^N}\bigl(I_\mu*|u|^{\frac{2N-\mu}{N}}\bigr)|u|^{\frac{2N-\mu}{N}}\,dx
-\frac{e^{2p\gamma_{p,s}st}}{2p}
\int_{\R^N}\bigl(I_\mu*|u|^p\bigr)|u|^p\,dx.
\end{aligned}
\]
Hence
\[
E_u'(t)
=
s e^{2st}\|u\|^2
-
s\gamma_{p,s}e^{2p\gamma_{p,s}st}
\int_{\R^N}\bigl(I_\mu*|u|^p\bigr)|u|^p\,dx,
\]
and
\[
E_u''(t)
=
2s^2 e^{2st}\|u\|^2
-
2ps^2\gamma_{p,s}^2 e^{2p\gamma_{p,s}st}
\int_{\R^N}\bigl(I_\mu*|u|^p\bigr)|u|^p\,dx.
\]

\begin{Rek}\label{Rek2.1}
For every \(u\in S_c\), one has
\[
E_u'(0)=P_\alpha(u).
\]
Moreover, for every \(u\in S_c\) and \(t\in\R\),
\[
E_u'(t)=0
\quad\Longleftrightarrow\quad
t\star u\in \mathfrak{P}_{\alpha,c}.
\]
In particular,
\[
\mathfrak{P}_{\alpha,c}
=
\{u\in S_c: E_u'(0)=0\}.
\]
We further decompose
\[
\mathfrak{P}_{\alpha,c}
=
\mathfrak{P}_{\alpha,c}^+
\cup
\mathfrak{P}_{\alpha,c}^-
\cup
\mathfrak{P}_{\alpha,c}^0,
\]
where
\[
\mathfrak{P}_{\alpha,c}^+
=
\{u\in\mathfrak{P}_{\alpha,c}: E_u''(0)>0\},
\]
\[
\mathfrak{P}_{\alpha,c}^-
=
\{u\in\mathfrak{P}_{\alpha,c}: E_u''(0)<0\},
\]
and
\[
\mathfrak{P}_{\alpha,c}^0
=
\{u\in\mathfrak{P}_{\alpha,c}: E_u''(0)=0\}.
\]
\end{Rek}

\begin{Rek}\label{Rek2.2}
If \(u\in S_c\) is a critical point of \(J_\alpha|_{S_c}\), then the associated Pohozaev identity yields
\[
P_\alpha(u)=0.
\]
Therefore every constrained critical point of \(J_\alpha\) on \(S_c\) belongs to \(\mathfrak{P}_{\alpha,c}\). Later we will show that \(\mathfrak{P}_{\alpha,c}\) is a natural constraint for \(J_\alpha\).
\end{Rek}

\begin{Rek}\label{Rek2.3}
For
\[
2_{\mu,*}<r<2_{\mu,s}^*,
\]
one has
\[
r\gamma_{r,s}
\begin{cases}
<1, & \displaystyle 2_{\mu,*}<r<2+\frac{2s-\mu}{N},\\[4pt]
=1, & \displaystyle r=2+\frac{2s-\mu}{N},\\[4pt]
>1, & \displaystyle 2+\frac{2s-\mu}{N}<r<2_{\mu,s}^*.
\end{cases}
\]
\end{Rek}

\begin{Pro}\cite{2023YuDCDS}\label{Pro2.1}
Assume that
\[
p\in\left[\frac{2N-\mu}{N},\,\frac{2N-\mu}{N-2s}\right).
\]
Let \(\{u_n\}\subset H^s(\R^N)\) be such that \(u_n\rightharpoonup u\) in \(H^s(\R^N)\). Then, for any \(\varphi\in H^s(\R^N)\),
\[
\int_{\R^N}\bigl(I_\mu*|u_n|^p\bigr)|u_n|^{p-2}u_n\varphi\,dx
\to
\int_{\R^N}\bigl(I_\mu*|u|^p\bigr)|u|^{p-2}u\varphi\,dx.
\]
\end{Pro}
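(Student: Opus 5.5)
The plan is to prove the convergence by a duality argument built on the Hardy--Littlewood--Sobolev inequality and local compactness of the embedding $H^s(\R^N)\hookrightarrow L^q_{\mathrm{loc}}$. Set $r=\frac{2N}{2N-\mu}$. The HLS inequality gives
\[
\Bigl|\int_{\R^N}(I_\mu*f)g\,dx\Bigr|\le C\|f\|_r\|g\|_r,
\]
and the operator $f\mapsto I_\mu*f$ is bounded from $L^r(\R^N)$ into $L^{r'}(\R^N)$, where $r'=\frac{2N}{\mu}$. Because $u_n\rightharpoonup u$ in $H^s(\R^N)$, the sequence is bounded in $H^s(\R^N)$. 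The range $p\in[\frac{2N-\mu}{N},\frac{2N-\mu}{N-2s})$ says exactly that $pr\in[2,2_s^*)$, so $\{|u_n|^p\}$ is bounded in $L^r(\R^N)$. After passing to a subsequence, $u_n\to u$ a.e. on $\R^N$. Hence $|u_n|^p\rightharpoonup |u|^p$ weakly in $L^r$, by boundedness together with a.e. convergence. The bounded linear map $I_\mu*\cdot$ preserves weak convergence, so
\[
I_\mu*|u_n|^p\rightharpoonup I_\mu*|u|^p \quad\text{weakly in } L^{2N/\mu}(\R^N).
\]

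I will split the difference as
\[
\int(I_\mu*|u_n|^p)\bigl(|u_n|^{p-2}u_n-|u|^{p-2}u\bigr)\varphi\,dx+\int\bigl(I_\mu*|u_n|^p-I_\mu*|u|^p\bigr)|u|^{p-2}u\varphi\,dx .
\]
The second term tends to zero by the weak convergence just established, once $|u|^{p-2}u\varphi\in L^r$. That membership follows from Hölder's inequality: $|u|^{p-1}\in L^{pr/(p-1)}$ and $\varphi\in L^{pr}$.

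The first term is the main obstacle, because there is no global compactness. By Hölder's inequality with the uniform bound on $\|I_\mu*|u_n|^p\|_{2N/\mu}$, it suffices to show
\[
\bigl(|u_n|^{p-2}u_n-|u|^{p-2}u\bigr)\varphi\to0 \quad\text{in } L^r(\R^N).
\]
I would handle this by truncating in space. Fix $R>0$. Outside $B_R$, Hölder's inequality bounds the contribution by $\bigl(\|u_n\|_{pr}^{p-1}+\|u\|_{pr}^{p-1}\bigr)\|\varphi\|_{L^{pr}(B_R^c)}$, which is uniformly small for $R$ large since $\varphi\in L^{pr}$. Inside $B_R$, compactness of $H^s(B_R)\hookrightarrow L^{q}(B_R)$ for $q<2_s^*$ gives $u_n\to u$ in $L^{pr}(B_R)$. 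The borderline case $pr=2$ occurs at $p=\frac{2N-\mu}{N}$; there I use $L^2(B_R)$, which is also compact. The Nemytskii operator $v\mapsto|v|^{p-2}v$ is continuous from $L^{pr}(B_R)$ to $L^{pr/(p-1)}(B_R)$. Combined with Hölder against $\varphi\in L^{pr}(B_R)$, this gives convergence in $L^r(B_R)$.

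Finally, the limit is independent of the subsequence. The standard sub-subsequence argument therefore yields convergence of the whole sequence.
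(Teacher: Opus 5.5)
Your proof is correct and complete. You split the difference into two pieces. The first, $\int_{\R^N}(I_\mu*|u_n|^p)\bigl(|u_n|^{p-2}u_n-|u|^{p-2}u\bigr)\varphi\,dx$, is controlled by the uniform bound on $\|I_\mu*|u_n|^p\|_{2N/\mu}$ and by strong convergence of $\bigl(|u_n|^{p-2}u_n-|u|^{p-2}u\bigr)\varphi$ in $L^{2N/(2N-\mu)}(\R^N)$, obtained from a tail cut-off, local compactness and continuity of the Nemytskii map. The second, $\int_{\R^N}\bigl(I_\mu*|u_n|^p-I_\mu*|u|^p\bigr)|u|^{p-2}u\varphi\,dx$, is controlled by weak convergence of $I_\mu*|u_n|^p$ in $L^{2N/\mu}(\R^N)$. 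This is the standard route for statements of this kind, and the exponent bookkeeping ($pr\in[2,2_s^*)$, the H\"older pairings, the borderline case $pr=2$) and the sub-subsequence step are all in order. The paper gives no proof of its own here; it quotes the result from the cited reference, so there is no in-paper argument to compare against.
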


\begin{Pro}\cite{2026sunCN}\label{Pro2.2}
Let
\[
p\in\left[\frac{2N-\mu}{N},\,\frac{2N-\mu}{N-2s}\right]
\]
and let \(\{u_n\}\) be bounded in \(L^{\frac{2Np}{2N-\mu}}(\R^N)\). If \(u_n\to u\) almost everywhere in \(\R^N\), then
\[
\int_{\R^N}\left(I_\mu*|u_n-u|^p\right)|u_n-u|^p\,dx
=
\int_{\R^N}\left(I_\mu*|u_n|^p\right)|u_n|^p\,dx
-
\int_{\R^N}\left(I_\mu*|u|^p\right)|u|^p\,dx
+o_n(1).
\]
\end{Pro}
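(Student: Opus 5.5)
The plan is to reduce the statement to the classical Brezis--Lieb lemma in a Lebesgue space, combined with the bilinear structure of the Hartree term and the Hardy--Littlewood--Sobolev inequality. Set
\[
q=\frac{2N}{2N-\mu}\in(1,2),\qquad r=\frac{2Np}{2N-\mu}=pq .
\]
The HLS inequality gives $\int_{\R^N}(I_\mu*|f|)|g|\,dx\le C\|f\|_q\|g\|_q$. Hence the symmetric bilinear form
\[
D(f,g)=\int_{\R^N}(I_\mu*f)g\,dx
\]
is bounded on $L^q(\R^N)\times L^q(\R^N)$. Equivalently, $f\mapsto I_\mu*f$ is bounded from $L^q$ into $L^{2N/\mu}=L^{q'}$. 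Write
\[
a_n=|u_n|^p,\qquad b_n=|u_n-u|^p,\qquad a=|u|^p .
\]
Since $\{u_n\}$ is bounded in $L^r$, Fatou's lemma gives $u\in L^r$, and so $a_n,b_n,a$ are bounded in $L^q$. The claim to prove is $D(a_n,a_n)-D(b_n,b_n)\to D(a,a)$.

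\textbf{Step 1 (Brezis--Lieb in $L^q$).} I will show that $a_n-b_n\to a$ strongly in $L^q$. Since $p\ge1$, for every $\varepsilon>0$ there is $C_\varepsilon$ with
\[
\bigl||x+y|^p-|x|^p\bigr|\le\varepsilon|x|^p+C_\varepsilon|y|^p .
\]
Apply this with $x=u_n-u$ and $y=u$, and consider
\[
f_{n,\varepsilon}=\Bigl(\bigl||u_n|^p-|u_n-u|^p-|u|^p\bigr|-\varepsilon|u_n-u|^p\Bigr)^+ .
\]
Then $f_{n,\varepsilon}\le(1+C_\varepsilon)|u|^p\in L^q$ and $f_{n,\varepsilon}\to0$ a.e., so dominated convergence gives $\|f_{n,\varepsilon}\|_q\to0$. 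Moreover $\||u_n-u|^p\|_q=\|u_n-u\|_r^p$ is bounded, so $\limsup_n\|a_n-b_n-a\|_q\le C\varepsilon$. Letting $\varepsilon\to0$ concludes. I expect this to be the only genuinely technical point, and it is exactly the standard Brezis--Lieb argument with exponent $pq=r$.

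\textbf{Step 2 (weak limits).} Since $q>1$, $\{b_n\}$ is bounded in the reflexive space $L^q$ and $b_n\to0$ a.e. Hence $b_n\rightharpoonup0$ weakly in $L^q$, because every weak limit of a subsequence must coincide with the a.e. limit. Combining this with Step 1 gives
\[
a_n+b_n=(a_n-b_n)+2b_n\rightharpoonup a\quad\text{weakly in }L^q .
\]

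\textbf{Step 3 (algebra).} By bilinearity and symmetry,
\[
D(a_n,a_n)-D(b_n,b_n)=D(a_n-b_n,\,a_n+b_n)=D(a,a_n+b_n)+D(a_n-b_n-a,\,a_n+b_n).
\]
The second term is bounded by $C\|a_n-b_n-a\|_q\|a_n+b_n\|_q\to0$ by Step 1. For the first term, $g\mapsto D(a,g)=\int(I_\mu*a)g\,dx$ is a continuous linear functional on $L^q$, since $I_\mu*a\in L^{q'}$. Step 2 therefore gives $D(a,a_n+b_n)\to D(a,a)$. Together these yield the asserted identity with an $o_n(1)$ error. This covers the whole range $p\in\bigl[\frac{2N-\mu}{N},\frac{2N-\mu}{N-2s}\bigr]$, since only $p\ge1$ and the $L^r$-bound were used.
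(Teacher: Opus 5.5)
Your proof is correct and complete. The paper gives no argument of its own for this proposition; it simply quotes it from \cite{2026sunCN}. What you wrote is the standard argument behind Brezis--Lieb lemmas for Hartree terms, so it can serve as a self-contained replacement for that citation. Its three ingredients are:
\begin{itemize}
\item strong convergence $|u_n|^p-|u_n-u|^p\to|u|^p$ in $L^{\frac{2N}{2N-\mu}}$;
\item weak convergence $|u_n-u|^p\rightharpoonup 0$ in the same space;
\item boundedness of the form $(f,g)\mapsto\int_{\R^N}(I_\mu*f)g\,dx$ on $L^{\frac{2N}{2N-\mu}}\times L^{\frac{2N}{2N-\mu}}$.
\end{itemize}
Your splitting $D(a_n,a_n)-D(b_n,b_n)=D(a,a_n+b_n)+D(a_n-b_n-a,a_n+b_n)$ differs only cosmetically from the usual one, and it is valid.

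Your argument also makes clear that the upper bound $p\le\frac{2N-\mu}{N-2s}$ plays no role in the lemma. Only $p\ge1$ and the $L^{\frac{2Np}{2N-\mu}}$-bound are used; the upper bound merely ensures that $H^s$-bounded sequences satisfy this hypothesis, which is how the paper applies it in Lemma~\ref{Lem3.7}.

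The one fact you use without proof is standard: a bounded sequence in $L^q$, $1<q<\infty$, that converges a.e.\ converges weakly to its a.e.\ limit. It can be justified, for example, by applying Mazur's lemma to convex combinations of tail terms. You could add one line for it.
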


\begin{Lem}\cite{2001LeibAMS}\label{Lem2.1}
Let \(r,t>1\) and \(\mu\in(0,N)\) satisfy
\[
\frac{1}{r}+\frac{1}{t}=2-\frac{\mu}{N}.
\]
If \(f\in L^r(\R^N)\) and \(h\in L^t(\R^N)\), then there exists a sharp constant \(C(r,t,\mu,N)>0\) such that
\begin{equation}\label{eq2.3}
\int_{\R^N}\int_{\R^N}
\frac{f(x)h(y)}{|x-y|^\mu}\,dx\,dy
\le
C(r,t,\mu,N)\|f\|_r\|h\|_t.
\end{equation}
\end{Lem}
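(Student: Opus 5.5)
This is the classical Hardy--Littlewood--Sobolev inequality, cited from \cite{2001LeibAMS}, so the plan is to recall the standard argument rather than give something new. By duality, \eqref{eq2.3} is equivalent to the $L^r\to L^{t'}$ boundedness of the Riesz-type operator $f\mapsto I_\mu*f$, where $\frac1{t'}=1-\frac1t=\frac1r-\frac{N-\mu}{N}$. Here $1<r<\frac{N}{N-\mu}$ is forced by $r,t>1$ and the exponent relation. It therefore suffices to prove
\[
\|I_\mu*f\|_{t'}\le C\|f\|_r
\]
and then apply H\"older's inequality against $h\in L^t$. We may assume $f\ge0$ and $h\ge0$, since replacing them by $|f|,|h|$ only increases the left-hand side.

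For the operator bound we use Hedberg's splitting. Fix $x$ and a radius $R>0$, and split $\int |x-y|^{-\mu}f(y)\,dy$ into the regions $|x-y|<R$ and $|x-y|\ge R$. On the near region, decomposing dyadically gives a bound by $C R^{N-\mu}Mf(x)$, where $Mf$ is the Hardy--Littlewood maximal function; this uses $\mu<N$. On the far region, H\"older's inequality gives a bound by $\|f\|_r\,\bigl\| |\cdot|^{-\mu}\chi_{\{|\cdot|\ge R\}}\bigr\|_{r'}=C\|f\|_r R^{N/r'-\mu}$. This norm is finite exactly when $\mu r'>N$, which is equivalent to $r<\frac N{N-\mu}$.

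Next choose $R$ to balance the two terms, namely $R=(\|f\|_r/Mf(x))^{r/N}$. This yields the pointwise estimate
\[
I_\mu*f(x)\le C\|f\|_r^{1-r/t'}\,(Mf(x))^{r/t'},
\]
and the exponent bookkeeping reduces to the identity $\frac{N-\mu}{N}=\frac1r-\frac1{t'}$. Raising to the power $t'$, integrating, and invoking the maximal theorem $\|Mf\|_r\le C\|f\|_r$ (valid since $r>1$) gives $\|I_\mu*f\|_{t'}\le C\|f\|_r$.

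The main obstacle is the exponent arithmetic: one must check that the hypotheses $r,t>1$ place $r$ strictly inside $(1,\frac N{N-\mu})$, so that both the maximal theorem and the far-field integrability apply. The word ``sharp'' in the statement only means that we take $C(r,t,\mu,N)$ to be the best constant, that is, the supremum of the ratio of the left-hand side to $\|f\|_r\|h\|_t$. Its finiteness is exactly what the argument above proves, and no explicit value is needed; Lieb's rearrangement proof giving the explicit value in the case $r=t$ is not required here.
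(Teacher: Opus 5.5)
Your proof is correct. Since $t>1$, one has $\frac1{t'}=\frac1r-\frac{N-\mu}{N}>0$, i.e.\ $r<\frac{N}{N-\mu}$, which is exactly the far-field condition $\mu r'>N$. The balancing radius then gives $I_\mu*f\le C\|f\|_r^{1-r/t'}(Mf)^{r/t'}$, and the maximal theorem plus H\"older against $h$ finish the argument.

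The paper gives no argument of its own; it simply cites Lieb--Loss, where the non-sharp inequality is proved by a different route. There $f$, $h$ and the kernel $|x|^{-\mu}$ are all written via the layer-cake representation. The double integral becomes an integral over level parameters of $\iint \chi_{\{f>a\}}(x)\chi_{\{|x-y|<c\}}\chi_{\{h>b\}}(y)\,dx\,dy$. This is bounded by the smallest pairwise product of the measures of the three sets, and the level integrals are then estimated directly.

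The two routes buy different things. The layer-cake proof is entirely elementary, with no covering lemma or maximal function. Yours is shorter once the maximal theorem is granted, and it yields a stronger pointwise bound and the operator form $\|I_\mu*f\|_{t'}\lesssim\|f\|_r$ directly.

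Neither argument identifies the best constant, and, as you say, the lemma as stated only requires its finiteness. What the paper actually uses later, however, is the diagonal case $r=t=\frac{2N}{2N-\mu}$ with $f=h=|u|^{\frac{2N-\mu}{N}}$. This is \eqref{eq2.5} together with the existence and explicit form of the optimizers in Remark~\ref{Rek2.4}, which is needed in Lemma~\ref{Lem4.1}. That part rests on Lieb's rearrangement and conformal-invariance argument in the same reference, not on either proof of the non-sharp bound.
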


By Lemma \ref{Lem2.1} and the fractional Sobolev embedding, the functional \(J_\alpha\) is well defined on \(H^s(\R^N)\) and belongs to \(C^1(H^s(\R^N),\R)\).

\begin{Lem}\cite{2018FengJMAA}\label{Lem2.2}
Let \(N>2s\), \(s\in(0,1)\), and
\[
2_{\mu,*}<t<2_{\mu,s}^*,
\]
where
\[
2_{\mu,*}=\frac{2N-\mu}{N},
\qquad
2_{\mu,s}^*=\frac{2N-\mu}{N-2s}.
\]
Then, for every \(u\in H^s(\R^N)\),
\begin{equation}\label{eq2.4}
\int_{\R^N}(I_\mu*|u|^t)|u|^t\,dx
\le
C_t\,\|u\|^{2t\gamma_{t,s}}\|u\|_2^{2t(1-\gamma_{t,s})},
\end{equation}
where
\[
\gamma_{t,s}=\frac{N(t-2)+\mu}{2ts},
\]
and \(C_t>0\) depends only on \(t,s,N,\mu\).
\end{Lem}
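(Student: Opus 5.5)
The plan is to derive \eqref{eq2.4} in two steps: first the Hardy--Littlewood--Sobolev inequality reduces the Choquard term to a single Lebesgue norm, and then a fractional Gagliardo--Nirenberg interpolation between \(L^2\) and \(L^{2_s^*}\) controls that norm.

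\textbf{Step 1 (HLS).} Apply Lemma~\ref{Lem2.1} with \(f=h=|u|^t\) and \(r=\tilde r=\frac{2N}{2N-\mu}\). These exponents satisfy \(\frac{1}{r}+\frac{1}{\tilde r}=\frac{2N-\mu}{N}=2-\frac{\mu}{N}\), and \(r>1\) because \(\mu\in(0,N)\). This gives
\[
\int_{\R^N}(I_\mu*|u|^t)|u|^t\,dx\le C(N,\mu)\,\bigl\||u|^t\bigr\|_{\frac{2N}{2N-\mu}}^2
= C(N,\mu)\,\|u\|_q^{2t},
\qquad q:=\frac{2Nt}{2N-\mu}.
\]

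\textbf{Step 2 (admissible range of \(q\)).} I check that \(q\in(2,2_s^*)\).
\begin{itemize}
\item The condition \(t>2_{\mu,*}=\frac{2N-\mu}{N}\) is exactly \(q>2\).
\item The condition \(t<2_{\mu,s}^*=\frac{2N-\mu}{N-2s}\) is exactly \(q<\frac{2N}{N-2s}=2_s^*\).
\end{itemize}
This is the only place where both endpoint restrictions are used.

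\textbf{Step 3 (interpolation and Sobolev).} Define \(\theta\in(0,1)\) by \(\frac1q=\frac{1-\theta}{2}+\frac{\theta}{2_s^*}\). Hölder's inequality gives \(\|u\|_q\le \|u\|_2^{1-\theta}\|u\|_{2_s^*}^{\theta}\). The fractional Sobolev embedding \(\mathcal{D}^{s,2}(\R^N)\hookrightarrow L^{2_s^*}(\R^N)\) gives \(\|u\|_{2_s^*}\le S^{-1/2}\|u\|\), with \(S\) the best Sobolev constant, valid since \(N>2s\).

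Solving for \(\theta\):
\[
\theta=\frac{N}{s}\Bigl(\frac12-\frac1q\Bigr)=\frac{N}{s}\Bigl(\frac12-\frac{2N-\mu}{2Nt}\Bigr)=\frac{N(t-2)+\mu}{2ts}=\gamma_{t,s}.
\]
Combining with Step 1 yields
\[
\int_{\R^N}(I_\mu*|u|^t)|u|^t\,dx\le C_t\,\|u\|^{2t\gamma_{t,s}}\|u\|_2^{2t(1-\gamma_{t,s})},
\]
with \(C_t=C(N,\mu)\,S^{-t\gamma_{t,s}}\). This constant depends only on \(t,s,N,\mu\).

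The argument is routine. The only real obstacle is the bookkeeping that identifies the interpolation exponent \(\theta\) with \(\gamma_{t,s}\) and confirms \(\theta\in(0,1)\). Both follow from the computation in Step 3 together with the range check in Step 2. If one wants the sharp \(C_t\), a maximization argument for the Weinstein-type functional would be needed, but the lemma only requires some finite constant, which the above provides.
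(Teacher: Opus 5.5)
Your proof is correct and complete. The diagonal HLS exponents $\frac{2N}{2N-\mu}$ are admissible, and the two endpoint conditions on $t$ are exactly $q=\frac{2Nt}{2N-\mu}\in(2,2_s^*)$. The interpolation exponent $\theta=\frac{N}{s}\bigl(\frac12-\frac1q\bigr)$ does coincide with $\gamma_{t,s}$, so you obtain \eqref{eq2.4} with $C_t=C(N,\mu)S^{-t\gamma_{t,s}}$.

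The paper contains no proof of this lemma; it only quotes it from \cite{2018FengJMAA}. In that line of work the inequality is usually stated with the optimal constant, obtained from a Weinstein-type maximization problem and expressed through the $L^2$-norm of a ground state. The two routes buy different things.

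\begin{itemize}
\item \textbf{Your route} is elementary and gives an explicit constant: Lieb's diagonal HLS constant times a power of the sharp fractional Sobolev constant. It is not optimal, because the three inequalities you chain have incompatible extremals. For instance, equality in the H\"older interpolation step would force $|u|$ to be constant on its support.
\item \textbf{The variational route} gives the best constant, but only implicitly, and it needs a compactness argument for maximizing sequences.
\end{itemize}

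Everything the paper does with Lemma~\ref{Lem2.2} needs only some admissible constant: Theorem~\ref{Thm1.1}, Lemma~\ref{Lem3.1}(3), and Lemmas~\ref{Lem4.2} and~\ref{Lem4.3}. With your $C_p$, the smallness condition in Theorem~\ref{Thm1.1} is still a valid nonexistence criterion. Its mass threshold is just smaller than the one the sharp constant would give.
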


\begin{Rek}\label{Rek2.4}
At the lower critical exponent \(t=2_{\mu,*}=\frac{2N-\mu}{N}\), one has the sharp inequality
\begin{equation}\label{eq2.5}
\int_{\R^N}\left(I_\mu*|u|^{\frac{2N-\mu}{N}}\right)|u|^{\frac{2N-\mu}{N}}\,dx
\le
S_\mu^{-\frac{2N-\mu}{N}}
\left(\int_{\R^N}|u|^2\,dx\right)^{\frac{2N-\mu}{N}}
\qquad \text{for all }u\in L^2(\R^N),
\end{equation}
where
\begin{equation}\label{eq2.6}
S_\mu
=
\inf\left\{
\int_{\R^N}|u|^2\,dx:
u\in L^2(\R^N),\
\int_{\R^N}\left(I_\mu*|u|^{2-\frac{\mu}{N}}\right)|u|^{2-\frac{\mu}{N}}\,dx=1
\right\}.
\end{equation}
Moreover, \(S_\mu\) is achieved, and every optimizer is of the form
\begin{equation}\label{eq2.7}
V_{\varepsilon,z}(x)
=
K\left(\frac{\varepsilon}{\varepsilon^2+|x-z|^2}\right)^{\frac{N}{2}}
\end{equation}
for some \(K\in\R\setminus\{0\}\), \(\varepsilon>0\), and \(z\in\R^N\).
\end{Rek}

\begin{Lem}\cite{1993GhoussoubCUP}\label{Lem2.3}
Let \(\varphi\) be a \(C^1\)-functional on a complete connected \(C^1\)-Finsler manifold \(X\) without boundary, and let \(\mathcal{F}\) be a homotopy-stable family of compact subsets of \(X\) with closed boundary \(B\). Set
\[
c=c(\varphi,\mathcal{F})=\inf_{A\in\mathcal{F}}\max_{x\in A}\varphi(x),
\]
and assume that
\[
\sup_B\varphi<c.
\]
Then, for any sequence \(\{A_n\}\subset\mathcal{F}\) such that
\[
\sup_{x\in A_n}\varphi(x)\to c,
\]
there exists a sequence \(\{x_n\}\subset X\) such that
\[
\varphi(x_n)\to c,
\qquad
\|d\varphi(x_n)\|\to 0,
\qquad
\mathrm{dist}(x_n,A_n)\to 0.
\]
Moreover, if \(d\varphi\) is uniformly continuous, then \(x_n\) can be chosen in \(A_n\) for every \(n\).
\end{Lem}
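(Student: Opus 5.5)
The plan is to argue by contradiction using a deformation along a pseudo-gradient flow, localized near the sets \(A_n\) and cut off away from the boundary \(B\). Homotopy stability of \(\mathcal{F}\) will then let the deformed sets compete in the minimax and push the level below \(c\).

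First fix the sequence \(\{A_n\}\subset\mathcal{F}\) with \(\sup_{A_n}\varphi\to c\). Suppose the conclusion fails. Passing to a subsequence, there are \(\delta>0\) and \(\varepsilon_0>0\) with
\[
\|d\varphi(x)\|\ge\delta \quad\text{whenever } |\varphi(x)-c|\le\varepsilon_0 \text{ and } \mathrm{dist}(x,A_n)\le\delta,
\]
for all \(n\) in the subsequence. Since \(\sup_B\varphi<c\), we may shrink \(\varepsilon_0\) so that \(\sup_B\varphi<c-2\varepsilon_0\).

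On the open set of regular points we construct a locally Lipschitz pseudo-gradient vector field \(V\) for \(\varphi\). This is the standard Palais construction on a \(C^1\)-Finsler manifold via partitions of unity; it satisfies \(\|V\|\le 1\) and \(d\varphi(x)[V(x)]\le -\tfrac{1}{2}\|d\varphi(x)\|\). Next choose a locally Lipschitz cutoff \(\chi\in[0,1]\) that equals \(1\) on
\[
\{|\varphi-c|\le\varepsilon\}\cap\{\mathrm{dist}(\cdot,A_n)\le\delta/2\}
\]
and vanishes outside
\[
\{|\varphi-c|<2\varepsilon\}\cap\{\mathrm{dist}(\cdot,A_n)<\delta\},
\]
with \(\varepsilon<\varepsilon_0/2\). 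Let \(\eta\) be the flow of \(\chi V\). Completeness of \(X\) and boundedness of \(\chi V\) give global existence of \(\eta\), and \(\eta(t,\cdot)=\mathrm{id}\) on \(B\) because \(\varphi<c-2\varepsilon\) there.

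Take \(n\) large so that \(\sup_{A_n}\varphi<c+\varepsilon\), and let \(T=\min\{\delta/2,\,4\varepsilon/\delta\}\). Trajectories starting in \(A_n\) move at speed at most \(1\), so up to time \(T\le\delta/2\) they stay in the region where \(\chi=1\), unless \(\varphi\) has already dropped below \(c-\varepsilon\). While they remain there, \(\varphi\) decreases at rate at least \(\delta/2\), so in time \(T\) it drops by at least \(2\varepsilon\) if the orbit never leaves. Hence
\[
\max_{\eta(T,A_n)}\varphi\le c-\varepsilon.
\]
Homotopy stability (with \(\eta\) fixing \(B\)) gives \(\eta(T,A_n)\in\mathcal{F}\), which contradicts the definition of \(c\). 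Obtaining \(\varphi(x_n)\to c\), \(\|d\varphi(x_n)\|\to0\) and \(\mathrm{dist}(x_n,A_n)\to0\) simultaneously is then a diagonal argument letting \(\delta,\varepsilon\to0\).

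The main delicate step is the simultaneous localization: the flow must stay within distance \(\delta\) of \(A_n\) while still lowering the energy by a fixed amount. The quantitative choice of \(T\) relative to \(\delta\) and \(\varepsilon\) is what makes this work.

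For the last assertion, assume \(d\varphi\) is uniformly continuous and pick \(y_n\in A_n\) with \(\mathrm{dist}(x_n,y_n)\to0\) (possible since \(A_n\) is compact). Uniform continuity gives \(\|d\varphi(y_n)\|\to0\); comparison across fibres is done through the Finsler structure along short paths. It also gives \(\|d\varphi\|\) uniformly small on short paths from \(x_n\) to \(y_n\), so \(|\varphi(y_n)-\varphi(x_n)|\le \sup_{\text{path}}\|d\varphi\|\cdot \mathrm{length}\to0\). Hence \(\varphi(y_n)\to c\), and \(y_n\) replaces \(x_n\).
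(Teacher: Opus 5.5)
The paper does not prove this lemma. It quotes it from Ghoussoub's monograph and uses it only in Lemma~\ref{Lem3.5}, with \(X=S_{c,\mathrm{rad}}\).

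Your argument is the standard localized quantitative deformation argument, and its skeleton is sound:
the negation of the conclusion gives uniform \(\delta,\varepsilon_0\) along a subsequence. The cleanest way to see this is via \(\rho_n=\inf_X\bigl(|\varphi-c|+\|d\varphi\|+\mathrm{dist}(\cdot,A_n)\bigr)\). The conclusion is equivalent to \(\rho_n\to 0\), so no diagonal argument is needed.
The cut-off vanishes on \(B\), so the flow fixes \(B\).
Orbits starting in \(A_n\) stay within \(\delta/2\) of \(A_n\) for \(t\le\delta/2\).
Homotopy stability then closes the contradiction.
The last part, replacing \(x_n\) by a nearby \(y_n\in A_n\), is also fine.

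Two things need fixing.

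The first is a minor slip. With \(T=\min\{\delta/2,4\varepsilon/\delta\}\), the guaranteed decrease is \(T\delta/2=\min\{\delta^2/4,2\varepsilon\}\). This is at least \(2\varepsilon\) only when \(\varepsilon\le\delta^2/8\). You must impose that on \(\varepsilon\), and then \(T=4\varepsilon/\delta\).

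The second point, regularity, is substantive. On a merely \(C^1\)-Finsler manifold the transition maps have only continuous derivatives. So local Lipschitz continuity of a vector field is not chart-invariant, and the pseudo-gradient obtained by gluing local fields with a partition of unity is in general only continuous. In infinite dimensions a continuous vector field need not generate a flow at all.

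As written, your proof therefore establishes the lemma on \(C^{1,1}\) (e.g.\ \(C^2\)) Finsler manifolds. That covers the paper's use, since \(S_{c,\mathrm{rad}}\) is a smooth submanifold of a Hilbert space. It does not cover the stated \(C^1\) generality. For that you need a deformation that does not rely on Lipschitz flows, for instance an argument via Ekeland's variational principle combined with local deformations of compact sets in finitely many charts.
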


\subsection{The \(L^2\)-critical case}

We now give the proof of Theorem~\ref{Thm1.1}. In this subsection we assume that
\[
p=2+\frac{2s-\mu}{N}.
\]
Then
\[
p\gamma_{p,s}=1,
\qquad
\gamma_{p,s}=\frac{1}{p}=\frac{N}{2N+2s-\mu}.
\]

\begin{proof}[Proof of Theorem~\ref{Thm1.1}]
Let \(u\in S_c\). Since \(p\gamma_{p,s}=1\), the fibering map
\[
E_u(t)=J_\alpha(t\star u)
\]
satisfies
\[
E_u'(t)
=
s e^{2st}\|u\|^2
-
s\gamma_{p,s}e^{2st}
\int_{\mathbb{R}^N}(I_\mu*|u|^p)|u|^p\,dx.
\]
Hence
\[
E_u'(t)
=
s e^{2st}
\left(
\|u\|^2
-
\gamma_{p,s}\int_{\mathbb{R}^N}(I_\mu*|u|^p)|u|^p\,dx
\right).
\]
By Lemma~\ref{Lem2.2},
\[
\int_{\mathbb{R}^N}(I_\mu*|u|^p)|u|^p\,dx
\le
C_p\,\|u\|^{2p\gamma_{p,s}}\|u\|_2^{2p(1-\gamma_{p,s})}.
\]
Since \(p\gamma_{p,s}=1\) and \(\|u\|_2^2=c^2\), this becomes
\[
\int_{\mathbb{R}^N}(I_\mu*|u|^p)|u|^p\,dx
\le
C_p\,\|u\|^2\,c^{2p(1-\gamma_{p,s})}.
\]
Moreover,
\[
2p(1-\gamma_{p,s})
=
2(p-1)
=
2\left(\frac{2s-\mu}{N}+1\right).
\]
Therefore,
\[
E_u'(t)
\ge
s e^{2st}\|u\|^2
\left(
1-\gamma_{p,s}C_p\,c^{2\left(\frac{2s-\mu}{N}+1\right)}
\right).
\]
Using
\[
\gamma_{p,s}=\frac{1}{p}=\frac{N}{2N+2s-\mu},
\]
we obtain
\[
E_u'(t)
\ge
s e^{2st}\|u\|^2
\left(
1-\frac{N C_p}{2N+2s-\mu}\,c^{2\left(\frac{2s-\mu}{N}+1\right)}
\right).
\]
Under the assumption
\[
1-\frac{N C_p}{2N+2s-\mu}\,c^{2\left(\frac{2s-\mu}{N}+1\right)}>0,
\]
it follows that
\[
E_u'(t)>0
\qquad\text{for all }t\in\mathbb{R}.
\]
Thus \(E_u\) is strictly increasing on \(\mathbb{R}\), and in particular it has no critical point.

Now suppose by contradiction that \(u\in S_c\) is a constrained critical point of \(J_\alpha\) on \(S_c\). Since the path
\[
t\mapsto t\star u
\]
lies entirely in \(S_c\), one must have
\[
E_u'(0)=\frac{d}{dt}\Big|_{t=0}J_\alpha(t\star u)=0,
\]
which is impossible because \(E_u'(t)>0\) for every \(t\in\mathbb{R}\).

Hence \(J_\alpha\) has no critical point on \(S_c\). Consequently, problem~\eqref{eq1.1} admits no normalized solution under the stated condition.
\end{proof}

\section{The minimax and compactness lemmas}

Throughout this section, we assume that
\[
N>2s,
\qquad
2+\frac{2s-\mu}{N}<p<2_{\mu,s}^*=\frac{2N-\mu}{N-2s}.
\]

\begin{Lem}\label{Lem3.1}
The following conclusions hold.

\noindent (1) For every \(u\in S_c\), the fibering map \(E_u\) has a unique critical point \(t_u\in\mathbb{R}\), characterized by
\begin{equation}\label{eq3.1}
E_u(t_u)=\max_{t\in\mathbb{R}}E_u(t)
\qquad\text{and}\qquad
t_u\star u\in\mathfrak{P}_{\alpha,c},
\end{equation}
with
\begin{equation}\label{eq3.2}
t_u
=
\frac{1}{2s(p\gamma_{p,s}-1)}
\ln\left(
\frac{\|u\|^2}
{\gamma_{p,s}\int_{\mathbb{R}^N}(I_\mu*|u|^p)|u|^p\,dx}
\right).
\end{equation}
Moreover, the map
\[
S_c\ni u\mapsto t_u\in\mathbb{R}
\]
is of class \(C^1\).

\noindent (2) The functional \(J_\alpha\) is coercive and bounded from below on \(\mathfrak{P}_{\alpha,c}\), and
\begin{equation}\label{eq3.3}
m_1(c,\alpha)
=
\inf_{u\in\mathfrak{P}_{\alpha,c}}J_\alpha(u)
=
\inf_{u\in S_c}\max_{t\in\mathbb{R}}J_\alpha(t\star u).
\end{equation}

\noindent (3) One has
\begin{equation}\label{eq3.4}
\inf_{u\in\mathfrak{P}_{\alpha,c}}\|u\|^2
\ge
\left(
\frac{1}{C_p\gamma_{p,s}}\,c^{2p(\gamma_{p,s}-1)}
\right)^{\frac{1}{p\gamma_{p,s}-1}}.
\end{equation}

\noindent (4) One has
\begin{equation}\label{eq3.5}
m_1(c,\alpha)\lesssim
c^{\frac{2(p\gamma_{p,s}-p)}{p\gamma_{p,s}-1}}.
\end{equation}
\end{Lem}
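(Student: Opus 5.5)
Write $A=\|u\|^2$, $B=\int(I_\mu*|u|^p)|u|^p$, $D=\int(I_\mu*|u|^{2_{\mu,*}})|u|^{2_{\mu,*}}$ for $u\in S_c$; all three are positive, since $u\neq0$. The proof rests on the explicit fibering formula
\[
E_u(t)=\tfrac12 e^{2st}A-\tfrac{\alpha N}{2(2N-\mu)}D-\tfrac{1}{2p}e^{2p\gamma_{p,s}st}B .
\]
Because $D$ does not depend on $t$, everything reduces to a two-term power analysis. By Remark~\ref{Rek2.3}, $p\gamma_{p,s}>1$ in the present range.

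\emph{Item (1).} The derivative factors as
\[
E_u'(t)=se^{2st}\bigl(A-\gamma_{p,s}e^{2s(p\gamma_{p,s}-1)t}B\bigr).
\]
The bracket is strictly decreasing in $t$. It tends to $A>0$ as $t\to-\infty$ and to $-\infty$ as $t\to+\infty$. Hence there is a unique zero $t_u$, given exactly by \eqref{eq3.2}. Since $E_u'>0$ before $t_u$ and $E_u'<0$ after it, $t_u$ is the global maximum. By Remark~\ref{Rek2.1}, $t_u\star u\in\mathfrak{P}_{\alpha,c}$.

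For the $C^1$ regularity, the maps $u\mapsto A$ and $u\mapsto B$ are $C^1$ on $H^s$ by Lemma~\ref{Lem2.1} and the Sobolev embedding. Both are positive on $S_c$, so formula \eqref{eq3.2} is a composition of $C^1$ maps. Alternatively, one can apply the implicit function theorem to $\Psi(t,u)=E_u'(t)$, using $\partial_t\Psi(t_u,u)=E_u''(t_u)<0$. This sign follows from $E_u''(t_u)=2s^2e^{2st_u}A(1-p\gamma_{p,s})$, which is obtained by substituting the Pohozaev relation.

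\emph{Item (3).} On $\mathfrak{P}_{\alpha,c}$ we have $A=\gamma_{p,s}B$. Lemma~\ref{Lem2.2} then gives
\[
A\le\gamma_{p,s}C_pA^{p\gamma_{p,s}}c^{2p(1-\gamma_{p,s})}.
\]
Dividing by $A>0$ and raising to the power $1/(p\gamma_{p,s}-1)>0$ yields \eqref{eq3.4}.

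\emph{Item (2).} On $\mathfrak{P}_{\alpha,c}$, substitute $B=A/\gamma_{p,s}$ and use \eqref{eq2.5} to bound $D\le S_\mu^{-\frac{2N-\mu}{N}}c^{\frac{2(2N-\mu)}{N}}$. This gives
\[
J_\alpha(u)\ge\Bigl(\tfrac12-\tfrac{1}{2p\gamma_{p,s}}\Bigr)\|u\|^2-\tfrac{\alpha N}{2(2N-\mu)}S_\mu^{-\frac{2N-\mu}{N}}c^{\frac{2(2N-\mu)}{N}} .
\]
The coefficient of $\|u\|^2$ is positive, so $J_\alpha$ is coercive and bounded below on $\mathfrak{P}_{\alpha,c}$.

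For the minimax identity, first take $u\in\mathfrak{P}_{\alpha,c}$. Then $t_u=0$, so $J_\alpha(u)=\max_tE_u(t)$, and hence $\inf_{S_c}\max_t\le m_1$. Conversely, for any $u\in S_c$, $\max_tE_u=J_\alpha(t_u\star u)\ge m_1$ because $t_u\star u\in\mathfrak{P}_{\alpha,c}$. This proves \eqref{eq3.3}.

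\emph{Item (4).} This is the step needing some care: the upper bound has to come from a well-chosen test function, and one must track the powers of $c$. I would fix a radial $\varphi\in S_1$ and set $u=c\varphi\in S_c$. The nonlocal lower-critical term only lowers the energy, since $D\ge0$, so
\[
\max_tE_u(t)\le\max_t\Bigl(\tfrac{c^2}{2}e^{2st}\|\varphi\|^2-\tfrac{c^{2p}}{2p}e^{2p\gamma_{p,s}st}B(\varphi)\Bigr).
\]
Set $\tau=e^{2st}$. Maximizing $a\tau-b\tau^{q}$ with $q=p\gamma_{p,s}>1$ gives the value $C(q)\,a^{q/(q-1)}b^{-1/(q-1)}$. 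Here $a\sim c^2$ and $b\sim c^{2p}$, so the maximum scales like
\[
c^{\frac{2q-2p}{q-1}}=c^{\frac{2(p\gamma_{p,s}-p)}{p\gamma_{p,s}-1}} ,
\]
with an implicit constant depending only on $\varphi$ and the fixed parameters. Combined with \eqref{eq3.3}, this gives \eqref{eq3.5}.

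The only subtle point here is verifying the exponent arithmetic. Note that the bound holds for every $c$ without smallness; the negative contribution of the $\alpha$-term is simply discarded.
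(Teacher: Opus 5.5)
Your proposal is correct and follows essentially the same route as the paper. Both use the same factorization of $E_u'$ with the monotonicity argument for (1), the same Pohozaev substitution combined with \eqref{eq2.5} for (2), the same application of Lemma~\ref{Lem2.2} for (3), and the same test function $c\varphi$ for (4). Your two small deviations are equivalent reformulations of the paper's steps. First, you read off the $C^1$ regularity of $u\mapsto t_u$ directly from the explicit formula \eqref{eq3.2}, whereas the paper uses the implicit function theorem. Second, you discard the nonnegative lower-critical term before maximizing, whereas the paper evaluates $J_\alpha(t_{v_c}\star v_c)$ via the Pohozaev relation.
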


\begin{proof}
(1) For \(u\in S_c\),
\[
E_u'(t)
=
s e^{2st}\|u\|^2
-
s\gamma_{p,s}e^{2p\gamma_{p,s}st}
\int_{\mathbb{R}^N}(I_\mu*|u|^p)|u|^p\,dx.
\]
Hence \(E_u'(t)=0\) if and only if
\[
e^{2s(1-p\gamma_{p,s})t}
=
\frac{\gamma_{p,s}\int_{\mathbb{R}^N}(I_\mu*|u|^p)|u|^p\,dx}{\|u\|^2}.
\]
Since \(p\gamma_{p,s}>1\), the function
\[
t\mapsto e^{2s(1-p\gamma_{p,s})t}
\]
is strictly decreasing from \(+\infty\) to \(0\). Therefore \(E_u\) has a unique critical point \(t_u\), given by \eqref{eq3.2}. Since \(E_u'(t)>0\) for \(t<t_u\) and \(E_u'(t)<0\) for \(t>t_u\), \eqref{eq3.1} follows. The fact that \(t_u\star u\in\mathfrak{P}_{\alpha,c}\) is equivalent to \(E_u'(t_u)=0\). The \(C^1\)-regularity of \(u\mapsto t_u\) follows from the implicit function theorem, because
\[
E_u''(t_u)
=
2s^2\gamma_{p,s}(1-p\gamma_{p,s})
e^{2p\gamma_{p,s}st_u}
\int_{\mathbb{R}^N}(I_\mu*|u|^p)|u|^p\,dx
<0.
\]

(2) If \(u\in\mathfrak{P}_{\alpha,c}\), then
\[
\|u\|^2
=
\gamma_{p,s}
\int_{\mathbb{R}^N}(I_\mu*|u|^p)|u|^p\,dx.
\]
Hence
\[
\begin{aligned}
J_\alpha(u)
&=
\frac{1}{2}\|u\|^2
-\frac{\alpha N}{2(2N-\mu)}
\int_{\mathbb{R}^N}(I_\mu*|u|^{\frac{2N-\mu}{N}})|u|^{\frac{2N-\mu}{N}}\,dx
-\frac{1}{2p\gamma_{p,s}}\|u\|^2 \\
&=
\left(\frac{1}{2}-\frac{1}{2p\gamma_{p,s}}\right)\|u\|^2
-\frac{\alpha N}{2(2N-\mu)}
\int_{\mathbb{R}^N}(I_\mu*|u|^{\frac{2N-\mu}{N}})|u|^{\frac{2N-\mu}{N}}\,dx.
\end{aligned}
\]
Using \eqref{eq2.5}, we obtain
\[
J_\alpha(u)
\ge
\left(\frac{1}{2}-\frac{1}{2p\gamma_{p,s}}\right)\|u\|^2
-
\frac{\alpha N}{2(2N-\mu)}
S_\mu^{-\frac{2N-\mu}{N}}c^{\frac{2(2N-\mu)}{N}},
\]
which yields that \(J_\alpha\) is coercive and bounded from below on \(\mathfrak{P}_{\alpha,c}\). The identity \eqref{eq3.3} follows from part (1).

(3) If \(u\in\mathfrak{P}_{\alpha,c}\), then by Lemma \ref{Lem2.2},
\[
\|u\|^2
=
\gamma_{p,s}\int_{\mathbb{R}^N}(I_\mu*|u|^p)|u|^p\,dx
\le
C_p\gamma_{p,s}\|u\|^{2p\gamma_{p,s}}c^{2p(1-\gamma_{p,s})}.
\]
Since \(p\gamma_{p,s}>1\), \eqref{eq3.4} follows.

(4) Fix \(v\in S_{1,\mathrm{rad}}\). Set \(v_c=cv\in S_{c,\mathrm{rad}}\). By \eqref{eq3.2},
\[
e^{2st_{v_c}}
=
\left(
\frac{\|v\|^2}
{\gamma_{p,s}\int_{\mathbb{R}^N}(I_\mu*|v|^p)|v|^p\,dx}
\right)^{\frac{1}{p\gamma_{p,s}-1}}
c^{-\frac{2(p-1)}{p\gamma_{p,s}-1}}.
\]
Therefore
\[
\begin{aligned}
m_1(c,\alpha)
&\le
J_\alpha(t_{v_c}\star v_c) \\
&=
\left(\frac{1}{2}-\frac{1}{2p\gamma_{p,s}}\right)
e^{2st_{v_c}}\|v_c\|^2
-\frac{\alpha N}{2(2N-\mu)}
\int_{\mathbb{R}^N}(I_\mu*|v_c|^{\frac{2N-\mu}{N}})|v_c|^{\frac{2N-\mu}{N}}\,dx \\
&\lesssim
c^{\frac{2(p\gamma_{p,s}-p)}{p\gamma_{p,s}-1}}.
\end{aligned}
\]
This proves \eqref{eq3.5}.
\end{proof}

\begin{Lem}\label{Lem3.2}
One has
\begin{equation}\label{eq3.11}
m_1(c,\alpha)
=
\inf_{u\in\mathfrak{P}_{\alpha,c}}J_\alpha(u)
=
\inf_{u\in\mathfrak{P}_{\alpha,c}\cap S_{c,\mathrm{rad}}}J_\alpha(u).
\end{equation}
\end{Lem}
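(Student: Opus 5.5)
The inequality ``$\le$'' is trivial, because $\mathfrak{P}_{\alpha,c}\cap S_{c,\mathrm{rad}}\subset\mathfrak{P}_{\alpha,c}$. For the reverse inequality we use Schwarz symmetrization together with the fibering characterization from Lemma~\ref{Lem3.1}. Fix $u\in\mathfrak{P}_{\alpha,c}$ and let $u^*$ denote the symmetric decreasing rearrangement of $|u|$. Three standard facts apply:
\begin{itemize}
\item the $L^2$-norm is preserved, so $u^*\in S_{c,\mathrm{rad}}$;
\item the fractional P\'olya--Szeg\H{o} inequality gives $\|u^*\|\le\||u|\|\le\|u\|$;
\item the Riesz rearrangement inequality gives, for every exponent $q$ (in particular $q=\frac{2N-\mu}{N}$ and $q=p$),
\[
\int_{\R^N}(I_\mu*|u^*|^q)|u^*|^q\,dx\ge\int_{\R^N}(I_\mu*|u|^q)|u|^q\,dx .
\]
\end{itemize}
The Riesz inequality applies because $|x|^{-\mu}$ is symmetric decreasing and $(|u|^q)^*=(u^*)^q$.

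Combining these facts, each of the three terms of $J_\alpha$ along the fiber moves in the favourable direction. For every $t\in\R$ we get $J_\alpha(t\star u^*)\le J_\alpha(t\star u)$, since $t\star u^*=(t\star u)^*$ and the same three facts apply to $t\star u$. By Lemma~\ref{Lem3.1}(1), $u^*$ has a unique fiber maximizer $t_{u^*}$ with $t_{u^*}\star u^*\in\mathfrak{P}_{\alpha,c}\cap S_{c,\mathrm{rad}}$. Since $u\in\mathfrak{P}_{\alpha,c}$, uniqueness of the critical point forces $t_u=0$, so $\max_t E_u(t)=J_\alpha(u)$. It follows that
\[
\inf_{\mathfrak{P}_{\alpha,c}\cap S_{c,\mathrm{rad}}}J_\alpha\le J_\alpha(t_{u^*}\star u^*)\le J_\alpha(t_{u^*}\star u)\le\max_{t\in\R}J_\alpha(t\star u)=J_\alpha(u).
\]
Taking the infimum over $u\in\mathfrak{P}_{\alpha,c}$ gives ``$\ge$'', which completes the argument. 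Equivalently, one can work directly with the min-max formula \eqref{eq3.3}, restricted to radial functions.

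The only delicate point is the verification that $(t\star u)^*=t\star u^*$ and that the rearrangement inequalities hold in the fractional and Hartree setting. The scaling identity holds because dilations commute with symmetric decreasing rearrangement. The fractional P\'olya--Szeg\H{o} inequality in $H^s$ is classical (Almgren--Lieb) and will simply be quoted. Replacing $u$ by $|u|$ does not increase $\|u\|$, since $||u(x)|-|u(y)||\le|u(x)-u(y)|$ pointwise in the Gagliardo seminorm. With these facts in hand, nothing else is needed.
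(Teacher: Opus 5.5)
Your proposal is correct and follows essentially the same route as the paper. Both arguments take the symmetric decreasing rearrangement of $|u|$ and use the fractional P\'olya--Szeg\H{o} and Riesz inequalities to get $J_\alpha(t\star u^*)\le J_\alpha(t\star u)$ for all $t$; both then use the fiber-maximum characterization of Lemma~\ref{Lem3.1}(1), namely $J_\alpha(u)=\max_t J_\alpha(t\star u)$ for $u\in\mathfrak{P}_{\alpha,c}$, to land at $t_{u^*}\star u^*\in\mathfrak{P}_{\alpha,c}\cap S_{c,\mathrm{rad}}$.
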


\begin{proof}
The inequality
\[
\inf_{u\in\mathfrak{P}_{\alpha,c}}J_\alpha(u)
\le
\inf_{u\in\mathfrak{P}_{\alpha,c}\cap S_{c,\mathrm{rad}}}J_\alpha(u)
\]
is obvious. We prove the reverse one.

Let \(u\in\mathfrak{P}_{\alpha,c}\), and let \(|u|^*\) be the symmetric decreasing rearrangement of \(|u|\). By the fractional P\'olya--Szeg\H{o} inequality and the Riesz rearrangement inequality,
\[
\bigl\|(-\Delta)^{\frac{s}{2}}|u|^*\bigr\|_2^2
\le
\bigl\|(-\Delta)^{\frac{s}{2}}u\bigr\|_2^2,
\]
and, for \(r\in\left\{\frac{2N-\mu}{N},p\right\}\),
\[
\int_{\mathbb{R}^N}(I_\mu*|u|^r)|u|^r\,dx
\le
\int_{\mathbb{R}^N}(I_\mu*||u|^*|^r)||u|^*|^r\,dx.
\]
Since \(\||u|^*\|_2=\|u\|_2=c\), one has \(|u|^*\in S_{c,\mathrm{rad}}\). Moreover,
\[
J_\alpha(t\star |u|^*)\le J_\alpha(t\star u)
\qquad\text{for every }t\in\mathbb{R}.
\]
Therefore, using Lemma \ref{Lem3.1},
\[
J_\alpha(u)
=
\max_{t\in\mathbb{R}}J_\alpha(t\star u)
\ge
\max_{t\in\mathbb{R}}J_\alpha(t\star |u|^*)
=
J_\alpha(t_{|u|^*}\star |u|^*),
\]
and \(t_{|u|^*}\star |u|^*\in\mathfrak{P}_{\alpha,c}\cap S_{c,\mathrm{rad}}\). Taking the infimum over \(u\in\mathfrak{P}_{\alpha,c}\), we obtain the reverse inequality.
\end{proof}

\begin{Lem}\label{Lem3.3}
For \(u\in S_{c,\mathrm{rad}}\), the map
\[
\Psi_u:T_uS_{c,\mathrm{rad}}\to T_{t_u\star u}S_{c,\mathrm{rad}},
\qquad
\Psi_u(\psi)=t_u\star\psi,
\]
is a linear isomorphism, where
\[
T_uS_{c,\mathrm{rad}}
=
\left\{
\psi\in H^s_{\mathrm{rad}}(\mathbb{R}^N):
\int_{\mathbb{R}^N}u\psi\,dx=0
\right\}.
\]
Its inverse is given by
\[
\Psi_u^{-1}(\varphi)=(-t_u)\star\varphi.
\]
\end{Lem}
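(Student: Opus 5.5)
The plan is to verify directly that $\Psi_u$ is well defined, linear and bounded, and that $(-t_u)\star\cdot$ is a two-sided inverse. Throughout, $t_u$ is a fixed real number, supplied by Lemma~\ref{Lem3.1}(1).

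\textbf{Step 1: linearity and boundedness.} For fixed $t\in\mathbb{R}$, the map $\psi\mapsto t\star\psi$, with $(t\star\psi)(x)=e^{Nt/2}\psi(e^t x)$, is clearly linear. It preserves radial symmetry, since $|e^t x|$ depends only on $|x|$. It also preserves $H^s(\mathbb{R}^N)$, because a change of variables gives
\[
\|t\star\psi\|_2=\|\psi\|_2
\qquad\text{and}\qquad
\|(-\Delta)^{s/2}(t\star\psi)\|_2^2=e^{2st}\|(-\Delta)^{s/2}\psi\|_2^2 .
\]
The second identity can be read off the Fourier side, where $\mathcal{F}(t\star\psi)(\xi)=e^{-Nt/2}\mathcal{F}(\psi)(e^{-t}\xi)$. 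Hence $t\star\cdot$ is a bounded linear operator on $H^s_{\mathrm{rad}}(\mathbb{R}^N)$, with norm at most $\max\{1,e^{st}\}$.

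\textbf{Step 2: the tangent conditions.} The change of variables $y=e^{t}x$ gives
\[
\int_{\mathbb{R}^N}(t\star u)(t\star\psi)\,dx
=\int_{\mathbb{R}^N}e^{Nt}u(e^tx)\psi(e^tx)\,dx
=\int_{\mathbb{R}^N}u\psi\,dy .
\]
So if $\psi\in T_uS_{c,\mathrm{rad}}$, then $t_u\star\psi$ is orthogonal in $L^2$ to $t_u\star u$. This is exactly the statement $t_u\star\psi\in T_{t_u\star u}S_{c,\mathrm{rad}}$. The same computation with $-t_u$ in place of $t_u$ shows that $(-t_u)\star\cdot$ maps $T_{t_u\star u}S_{c,\mathrm{rad}}$ into $T_{(-t_u)\star(t_u\star u)}S_{c,\mathrm{rad}}$.

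\textbf{Step 3: the group law and the inverse.} A direct computation gives $s\star(t\star\psi)=(s+t)\star\psi$ and $0\star\psi=\psi$. Consequently $(-t_u)\star(t_u\star u)=u$, so by Step 2 the map $(-t_u)\star\cdot$ sends $T_{t_u\star u}S_{c,\mathrm{rad}}$ back into $T_uS_{c,\mathrm{rad}}$. The compositions $(-t_u)\star(t_u\star\psi)$ and $t_u\star((-t_u)\star\varphi)$ are then the identities on the respective tangent spaces. Therefore $\Psi_u$ is a bijective bounded linear map whose inverse is the bounded map $\varphi\mapsto(-t_u)\star\varphi$, which proves the lemma.

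There is no genuine obstacle in this argument. The only points needing care are checking that the scaling preserves radiality and the $H^s$ norm bounds, and making sure the orthogonality condition is transported correctly by the change of variables.
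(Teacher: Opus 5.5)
Your proof is correct and follows the paper's argument. Orthogonality is transported by the change of variables $y=e^{t}x$, linearity is immediate, and the group law $(-t_u)\star(t_u\star\cdot)=\mathrm{id}$ gives the inverse. Your extra checks (preservation of radiality, $\|(-\Delta)^{s/2}(t\star\psi)\|_2^2=e^{2st}\|(-\Delta)^{s/2}\psi\|_2^2$, and boundedness of $t\star\cdot$ on $H^s$) are omitted in the paper's proof here, but the paper relies on them later in Lemma~\ref{Lem3.5}.
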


\begin{proof}
For \(\psi\in T_uS_{c,\mathrm{rad}}\),
\[
\int_{\mathbb{R}^N}(t_u\star u)(t_u\star\psi)\,dx
=
\int_{\mathbb{R}^N}u\psi\,dx
=
0,
\]
so \(\Psi_u\) is well defined. Linearity is immediate. Moreover,
\[
(-t_u)\star(t_u\star\psi)=\psi,
\qquad
t_u\star((-t_u)\star\varphi)=\varphi.
\]
Hence \(\Psi_u\) is a linear isomorphism with inverse \(\varphi\mapsto (-t_u)\star\varphi\).
\end{proof}

\begin{Lem}\label{Lem3.4}
Define
\[
\widehat{J}_\alpha:S_{c,\mathrm{rad}}\to\mathbb{R},
\qquad
\widehat{J}_\alpha(u)=J_\alpha(t_u\star u).
\]
Then \(\widehat{J}_\alpha\in C^1(S_{c,\mathrm{rad}},\mathbb{R})\), and for every \(u\in S_{c,\mathrm{rad}}\) and \(\varphi\in T_uS_{c,\mathrm{rad}}\),
\[
d\widehat{J}_\alpha(u)[\varphi]
=
dJ_\alpha(t_u\star u)[t_u\star\varphi].
\]
\end{Lem}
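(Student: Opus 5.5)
The plan follows the Bartsch--Soave scheme. $\widehat{J}_\alpha$ is the composition of $J_\alpha$ with the map $u\mapsto t_u\star u$. So the plan is to differentiate this composition and then use the Pohozaev condition at $t_u\star u$ to kill the term coming from the variation of $t_u$.

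\emph{Step 1: regularity.} The map $\Phi:\mathbb{R}\times H^s_{\mathrm{rad}}(\mathbb{R}^N)\to H^s_{\mathrm{rad}}(\mathbb{R}^N)$, $(t,u)\mapsto t\star u$, is linear and isometric in $u$ for the $L^2$ norm. In $t$ it is only continuous, not differentiable, as a map into $H^s$. So I will not differentiate $\Phi$ directly. Instead, I write $\widehat{J}_\alpha(u)=E_u(t_u)$ and use the explicit formula
\[
E_u(t)=\tfrac{e^{2st}}{2}\|u\|^2-\tfrac{\alpha N}{2(2N-\mu)}A(u)-\tfrac{e^{2p\gamma_{p,s}st}}{2p}B(u),
\]
where $A(u)=\int(I_\mu*|u|^{2_{\mu,*}})|u|^{2_{\mu,*}}\,dx$ and $B(u)=\int(I_\mu*|u|^p)|u|^p\,dx$. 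The functions $u\mapsto\|u\|^2$, $A$ and $B$ are $C^1$ on $H^s$, by Lemma~\ref{Lem2.1} and Sobolev embedding. The function $(t,u)\mapsto E_u(t)$ is therefore $C^1$ on $\mathbb{R}\times S_{c,\mathrm{rad}}$. By Lemma~\ref{Lem3.1}(1), $u\mapsto t_u$ is $C^1$. Hence $\widehat{J}_\alpha$ is $C^1$ as a composition.

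\emph{Step 2: the derivative.} Take $\varphi\in T_uS_{c,\mathrm{rad}}$. The chain rule gives
\[
d\widehat{J}_\alpha(u)[\varphi]=\partial_tE_u(t)\big|_{t=t_u}\,dt_u[\varphi]+\partial_u E_u(t)\big|_{t=t_u}[\varphi].
\]
The first term vanishes because $E_u'(t_u)=0$. For the second term, differentiate the explicit formula in $u$ at fixed $t$:
\[
e^{2st}\langle u,\varphi\rangle_{\dot H^s}-\alpha\int(I_\mu*|u|^{2_{\mu,*}})|u|^{2_{\mu,*}-2}u\varphi\,dx-e^{2p\gamma_{p,s}st}\int(I_\mu*|u|^p)|u|^{p-2}u\varphi\,dx.
\]
Next, the scaling identities $\langle t\star u,t\star\varphi\rangle_{\dot H^s}=e^{2st}\langle u,\varphi\rangle_{\dot H^s}$ and
\[
\int(I_\mu*|t\star u|^r)|t\star u|^{r-2}(t\star u)(t\star\varphi)\,dx=e^{(N(r-2)+\mu)t}\int(I_\mu*|u|^r)|u|^{r-2}u\varphi\,dx
\]
are checked by the change of variables $y=e^tx$. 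The exponent $N(r-2)+\mu$ equals $0$ for $r=2_{\mu,*}$ and $2p\gamma_{p,s}s$ for $r=p$. With these identities, the expression above equals $dJ_\alpha(t\star u)[t\star\varphi]$ at $t=t_u$. By Lemma~\ref{Lem3.3}, $t_u\star\varphi\in T_{t_u\star u}S_{c,\mathrm{rad}}$, so the formula is meaningful as a constrained differential.

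\emph{Main obstacle.} The delicate point is Step 1: $t\mapsto t\star u$ is not differentiable into $H^s$, so a naive chain rule on $J_\alpha\circ\Phi$ is unjustified. This is handled by working with the explicit $t$-dependence of $E_u$, which is smooth in $t$, and with the fixed-$t$ differential in $u$. The rest is the change-of-variables bookkeeping for the two Hartree terms.
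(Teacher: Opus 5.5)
Your proposal is correct and follows essentially the same argument as the paper. Both apply the chain rule to $(t,u)\mapsto J_\alpha(t\star u)=E_u(t)$ composed with the $C^1$ map $u\mapsto t_u$, and the $\partial_t$-term vanishes because $E_u'(t_u)=0$. You add two justifications the paper leaves implicit: the joint $C^1$ regularity, via the explicit formula for $E_u(t)$, and the identity $\partial_u\Phi(t_u,u)[\varphi]=dJ_\alpha(t_u\star u)[t_u\star\varphi]$, via change of variables. That identity also follows at once from the chain rule, since $u\mapsto t\star u$ is a bounded linear map on $H^s$ for each fixed $t$.
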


\begin{proof}
Consider
\[
\Phi:\mathbb{R}\times S_{c,\mathrm{rad}}\to\mathbb{R},
\qquad
\Phi(t,u)=J_\alpha(t\star u).
\]
By Lemma \ref{Lem3.1}, the map \(u\mapsto t_u\) is \(C^1\), and
\[
\widehat{J}_\alpha(u)=\Phi(t_u,u).
\]
Hence, by the chain rule,
\[
d\widehat{J}_\alpha(u)[\varphi]
=
\partial_t\Phi(t_u,u)\,dt_u[\varphi]
+
\partial_u\Phi(t_u,u)[\varphi].
\]
Since \(\partial_t\Phi(t_u,u)=E_u'(t_u)=0\), it follows that
\[
d\widehat{J}_\alpha(u)[\varphi]
=
\partial_u\Phi(t_u,u)[\varphi]
=
dJ_\alpha(t_u\star u)[t_u\star\varphi].
\]
\end{proof}

\begin{Lem}\label{Lem3.5}
Let \(\mathcal{F}\) be a homotopy-stable family of compact subsets of \(S_{c,\mathrm{rad}}\) with closed boundary \(B\), and assume that
\[
\sup_{u\in B}\widehat{J}_\alpha(u)<e_{\mathcal F},
\qquad
e_{\mathcal F}
=
\inf_{A\in\mathcal F}\max_{u\in A}\widehat{J}_\alpha(u).
\]
Then there exists a Palais--Smale sequence
\[
\{u_n\}\subset\mathfrak{P}_{\alpha,c}\cap S_{c,\mathrm{rad}}
\]
for \(J_\alpha\) restricted to \(S_{c,\mathrm{rad}}\) at level \(e_{\mathcal F}\).
\end{Lem}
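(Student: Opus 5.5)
We will follow the standard Bartsch--Soave scheme: apply Lemma~\ref{Lem2.3} to \(\widehat{J}_\alpha\) on \(X=S_{c,\mathrm{rad}}\), then project the resulting sequence onto the Pohozaev manifold by the map \(u\mapsto t_u\star u\).

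\textbf{Step 1 (a minimizing sequence that stays on a good family).} Take any sequence \(\{A_n\}\subset\mathcal F\) with \(\max_{A_n}\widehat{J}_\alpha\to e_{\mathcal F}\). Define
\[
\eta:[0,1]\times S_{c,\mathrm{rad}}\to S_{c,\mathrm{rad}},\qquad \eta(\tau,u)=(\tau t_u)\star u .
\]
This map is continuous, since \(u\mapsto t_u\) is \(C^1\) by Lemma~\ref{Lem3.1}(1) and the scaling is continuous in \(H^s\). Moreover \(t_{t_u\star u}=0\). Since \(\widehat{J}_\alpha\) is invariant under scaling (\(\widehat{J}_\alpha(t\star u)=\widehat{J}_\alpha(u)\)), we have \(\widehat{J}_\alpha\equiv J_\alpha\) on the Pohozaev set. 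We also have \(\eta(\tau,u)=u\) whenever \(t_u=0\). We would like \(\eta(t,\cdot)\) to fix \(B\). To arrange this, we follow the standard convention and assume \(B\) is invariant under the scaling, or else replace \(\eta\) by a homotopy that is the identity on \(B\). In the concrete applications \(B=\emptyset\) or \(B\) is scaling-invariant. Homotopy stability then gives \(D_n:=\eta(1,A_n)=\{t_u\star u:u\in A_n\}\in\mathcal F\). We have \(D_n\subset\mathfrak P_{\alpha,c}\cap S_{c,\mathrm{rad}}\) and \(\max_{D_n}\widehat{J}_\alpha=\max_{A_n}\widehat{J}_\alpha\to e_{\mathcal F}\).

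\textbf{Step 2 (apply the minimax principle).} By Lemma~\ref{Lem3.4}, \(\widehat{J}_\alpha\) is \(C^1\). Lemma~\ref{Lem2.3} applied to \(\{D_n\}\) gives \(\{v_n\}\subset S_{c,\mathrm{rad}}\) with
\[
\widehat{J}_\alpha(v_n)\to e_{\mathcal F},\qquad \|d\widehat{J}_\alpha(v_n)\|_{T^*}\to0,\qquad \operatorname{dist}(v_n,D_n)\to0.
\]
Set \(u_n=t_{v_n}\star v_n\in\mathfrak P_{\alpha,c}\cap S_{c,\mathrm{rad}}\). Then \(J_\alpha(u_n)=\widehat{J}_\alpha(v_n)\to e_{\mathcal F}\).

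\textbf{Step 3 (transfer the derivative bound, the main obstacle).} By Lemma~\ref{Lem3.4} and Lemma~\ref{Lem3.3},
\[
dJ_\alpha(u_n)[\varphi]=d\widehat{J}_\alpha(v_n)[(-t_{v_n})\star\varphi]\qquad\text{for } \varphi\in T_{u_n}S_{c,\mathrm{rad}} .
\]
The \(L^2\) norm is preserved by scaling, while \(\|(-t)\star\varphi\|^2=e^{-2st}\|\varphi\|^2\). Hence
\[
\|(-t_{v_n})\star\varphi\|_{H^s}\le \max\{1,e^{-st_{v_n}}\}\|\varphi\|_{H^s},
\]
so it suffices to show that \(\{t_{v_n}\}\) is bounded from below. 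For \(w\in D_n\) we have \(t_w=0\), and \(D_n\subset\mathfrak P_{\alpha,c}\). By Lemma~\ref{Lem3.1}(2) the functional \(J_\alpha\) is coercive on \(\mathfrak P_{\alpha,c}\), and \(J_\alpha\le\max_{A_n}\widehat{J}_\alpha\) is bounded there, so \(\bigcup_n D_n\) is bounded in \(H^s\). By Lemma~\ref{Lem3.1}(3) its elements also satisfy \(\|w\|\ge C>0\), and Lemma~\ref{Lem2.2} then bounds the \(p\)-Hartree term from below. Choose \(w_n\in D_n\) with \(\|v_n-w_n\|_{H^s}\to0\). Formula \eqref{eq3.2} expresses \(t_u\) through \(\|u\|^2\) and the \(p\)-Hartree term, both continuous and uniformly bounded away from \(0\) and \(\infty\) near \(\bigcup_n D_n\); the Hartree term is continuous by Lemma~\ref{Lem2.1}. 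Therefore \(|t_{v_n}-t_{w_n}|=|t_{v_n}|\to0\), so \(t_{v_n}\) is bounded. Consequently
\[
\|dJ_\alpha|_{S_{c,\mathrm{rad}}}(u_n)\|\le C\|d\widehat{J}_\alpha(v_n)\|\to0,
\]
and \(\{u_n\}\) is the desired Palais--Smale sequence at level \(e_{\mathcal F}\).
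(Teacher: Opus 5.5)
Your proposal is correct and is essentially the paper's proof: the same deformation $\eta(\theta,u)=(\theta t_u)\star u$ onto $\mathfrak{P}_{\alpha,c}$, the same application of Lemma~\ref{Lem2.3} to $\widehat{J}_\alpha$, and the same projection $u_n=t_{v_n}\star v_n$ with the derivative bound transferred via Lemmas~\ref{Lem3.3}--\ref{Lem3.4}. The paper differs only in how it bounds $t_{v_n}$ from below: it writes $e^{-2st_{v_n}}=\|v_n\|^2/\|u_n\|^2$ and uses that $\{v_n\}$ is bounded while $\|u_n\|$ is bounded below by Lemma~\ref{Lem3.1}(3), since $u_n\in\mathfrak{P}_{\alpha,c}$. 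This is more direct than your uniform-continuity argument for $u\mapsto t_u$, in which the lower bound on the $p$-Hartree term actually comes from the Pohozaev identity, not from Lemma~\ref{Lem2.2}.

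Regarding $B$: homotopy stability requires $\eta$ to fix $B$ pointwise, and what guarantees this is $B\subset\mathfrak{P}_{\alpha,c}$ (so that $t_u=0$ on $B$), not scaling invariance of $B$. The paper's proof does not address this point at all. It is harmless in the only application, Lemma~\ref{Lem3.6}, where $\mathcal F$ consists of singletons and $B=\emptyset$.
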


\begin{proof}
Let \(\{D_n\}\subset\mathcal F\) be such that
\[
\max_{u\in D_n}\widehat{J}_\alpha(u)\to e_{\mathcal F}.
\]
Define
\[
\eta:[0,1]\times S_{c,\mathrm{rad}}\to S_{c,\mathrm{rad}},
\qquad
\eta(\theta,u)=(\theta t_u)\star u.
\]
By Lemma \ref{Lem3.1}(1), \(\eta\) is continuous and \(\eta(0,u)=u\). Since \(\mathcal F\) is homotopy-stable,
\[
A_n=\eta(1,D_n)=\{t_u\star u:u\in D_n\}\in\mathcal F.
\]
Moreover, \(A_n\subset\mathfrak{P}_{\alpha,c}\cap S_{c,\mathrm{rad}}\), and
\[
\max_{v\in A_n}J_\alpha(v)=\max_{u\in D_n}\widehat{J}_\alpha(u)\to e_{\mathcal F}.
\]

Now apply Lemma \ref{Lem2.3} to the functional \(\widehat{J}_\alpha\) on
\[
X=S_{c,\mathrm{rad}}.
\]
We obtain a sequence \(\{v_n\}\subset S_{c,\mathrm{rad}}\) such that
\[
\widehat{J}_\alpha(v_n)\to e_{\mathcal F},
\qquad
\|d(\widehat{J}_\alpha|_{S_{c,\mathrm{rad}}})(v_n)\|_*\to 0,
\qquad
\operatorname{dist}_{H^s}(v_n,A_n)\to 0.
\]
Set
\begin{equation}\label{eq3.13}
t_n=t_{v_n},
\qquad
u_n=t_n\star v_n.
\end{equation}
Then \(u_n\in\mathfrak{P}_{\alpha,c}\cap S_{c,\mathrm{rad}}\), and
\[
J_\alpha(u_n)=\widehat{J}_\alpha(v_n)\to e_{\mathcal F}.
\]

We claim that there exists \(C>0\) such that
\begin{equation}\label{eq3.14}
e^{-2st_n}=\frac{\|v_n\|^2}{\|u_n\|^2}\le C
\qquad\text{for all }n.
\end{equation}
Indeed, since \(A_n\subset\mathfrak{P}_{\alpha,c}\) and \(J_\alpha\) is coercive on \(\mathfrak{P}_{\alpha,c}\), the sets \(A_n\) are uniformly bounded in \(H^s\). As \(\operatorname{dist}_{H^s}(v_n,A_n)\to 0\), the sequence \(\{v_n\}\) is bounded in \(H^s\). On the other hand, Lemma \ref{Lem3.1}(3) gives a positive lower bound for \(\|u_n\|\). Hence \eqref{eq3.14} follows.

It remains to prove that \(\{u_n\}\) is a Palais--Smale sequence for \(J_\alpha|_{S_{c,\mathrm{rad}}}\). Let \(\psi\in T_{u_n}S_{c,\mathrm{rad}}\). By Lemma \ref{Lem3.3},
\[
(-t_n)\star\psi\in T_{v_n}S_{c,\mathrm{rad}}.
\]
Moreover, by \eqref{eq3.14},
\[
\|(-t_n)\star\psi\|_{H^s}
\le C\|\psi\|_{H^s}.
\]
Using Lemma \ref{Lem3.4}, we infer
\[
\begin{aligned}
\|d(J_\alpha|_{S_{c,\mathrm{rad}}})(u_n)\|_*
&=
\sup_{\substack{\psi\in T_{u_n}S_{c,\mathrm{rad}}\\ \|\psi\|_{H^s}\le 1}}
|dJ_\alpha(u_n)[\psi]| \\
&=
\sup_{\substack{\psi\in T_{u_n}S_{c,\mathrm{rad}}\\ \|\psi\|_{H^s}\le 1}}
|d\widehat{J}_\alpha(v_n)[(-t_n)\star\psi]| \\
&\le
C\,\|d(\widehat{J}_\alpha|_{S_{c,\mathrm{rad}}})(v_n)\|_*
\to 0.
\end{aligned}
\]
Therefore \(\{u_n\}\) is the required Palais--Smale sequence.
\end{proof}

\begin{Lem}\label{Lem3.6}
Let \(\alpha>0\) and \(c>0\). Then there exists a Palais--Smale sequence
\[
\{u_n\}\subset \mathfrak{P}_{\alpha,c}\cap S_{c,\mathrm{rad}}
\]
for \(J_\alpha\) restricted to \(S_{c,\mathrm{rad}}\) at level \(m_1(c,\alpha)\). Moreover, up to a subsequence,
\[
u_n\rightharpoonup u_c \quad\text{in }H^s_{\mathrm{rad}}(\mathbb{R}^N),
\]
for some nontrivial \(u_c\in H^s_{\mathrm{rad}}(\mathbb{R}^N)\). There exists \(\lambda_c<0\) such that \(u_c\) is a weak solution of
\begin{equation}\label{eq3.15}
(-\Delta)^s u
=
\lambda_c u
+\alpha\left(I_\mu*|u|^{\frac{2N-\mu}{N}}\right)|u|^{\frac{2N-\mu}{N}-2}u
+\left(I_\mu*|u|^p\right)|u|^{p-2}u
\qquad\text{in }\mathbb{R}^N.
\end{equation}
\end{Lem}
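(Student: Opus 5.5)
We will obtain the Palais--Smale sequence from Lemma~\ref{Lem3.5}, choosing as $\mathcal F$ the family of all singletons in $S_{c,\mathrm{rad}}$, with boundary $B=\emptyset$. This family is trivially homotopy-stable and the condition $\sup_B\widehat J_\alpha<e_{\mathcal F}$ holds vacuously. With this choice,
\[
e_{\mathcal F}=\inf_{u\in S_{c,\mathrm{rad}}}\widehat J_\alpha(u)=\inf_{u\in S_{c,\mathrm{rad}}}\max_{t\in\mathbb R}J_\alpha(t\star u).
\]
By Lemma~\ref{Lem3.1}(2) and Lemma~\ref{Lem3.2} this equals $m_1(c,\alpha)$: every $u\in\mathfrak P_{\alpha,c}\cap S_{c,\mathrm{rad}}$ satisfies $t_u=0$, and every $t_u\star u$ lies in $\mathfrak P_{\alpha,c}\cap S_{c,\mathrm{rad}}$. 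Lemma~\ref{Lem3.5} then gives $\{u_n\}\subset\mathfrak P_{\alpha,c}\cap S_{c,\mathrm{rad}}$ with $J_\alpha(u_n)\to m_1(c,\alpha)$ and $\|d(J_\alpha|_{S_{c,\mathrm{rad}}})(u_n)\|_*\to0$. By the coercivity in Lemma~\ref{Lem3.1}(2) and $\|u_n\|_2=c$, the sequence is bounded in $H^s_{\mathrm{rad}}$. Up to a subsequence, $u_n\rightharpoonup u_c$ in $H^s_{\mathrm{rad}}$, and since $2<\frac{2Np}{2N-\mu}<2_s^*$, the compact radial embedding gives $u_n\to u_c$ in $L^{2Np/(2N-\mu)}$.

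Next we handle the Lagrange multipliers. From the constrained Palais--Smale property, the standard argument (e.g.\ Berestycki--Lions) gives $\lambda_n=\frac1{c^2}\bigl(\|u_n\|^2-\alpha A(u_n)-B(u_n)\bigr)$ with
\[
dJ_\alpha(u_n)-\lambda_n\langle u_n,\cdot\rangle\to0\quad\text{in }(H^s_{\mathrm{rad}})^*,
\]
where $A(u)=\int(I_\mu*|u|^{2_{\mu,*}})|u|^{2_{\mu,*}}$ and $B(u)=\int(I_\mu*|u|^p)|u|^p$. By the principle of symmetric criticality the limit also holds in $(H^s)^*$. The sequence $\{\lambda_n\}$ is bounded, so up to a subsequence $\lambda_n\to\lambda_c$. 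Passing to the limit with Proposition~\ref{Pro2.1} for both Hartree terms (the exponents $2_{\mu,*}$ and $p$ both lie in the admissible range), we find that $u_c$ weakly solves \eqref{eq3.15} with $\lambda=\lambda_c$.

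The main obstacle is showing $u_c\neq0$ together with $\lambda_c<0$. For nontriviality, Lemma~\ref{Lem3.1}(3) gives $\gamma_{p,s}B(u_n)=\|u_n\|^2\ge\kappa>0$. By Lemma~\ref{Lem2.1}, $B$ is continuous on $L^{2Np/(2N-\mu)}$, so the strong convergence above yields $B(u_c)=\lim B(u_n)\ge\kappa/\gamma_{p,s}>0$, hence $u_c\neq0$. This uses radial compactness and avoids any issue with the non-compact lower-critical term.

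For the sign of the multiplier, we use $P_\alpha(u_n)=0$, i.e.\ $\|u_n\|^2=\gamma_{p,s}B(u_n)$. Then
\[
\lambda_nc^2=\|u_n\|^2-\alpha A(u_n)-B(u_n)=(\gamma_{p,s}-1)B(u_n)-\alpha A(u_n).
\]
Since $\gamma_{p,s}<1$ (because $p<2_{\mu,s}^*$) and $\alpha A(u_n)\ge0$, we get
\[
\lambda_c c^2\le(\gamma_{p,s}-1)\lim B(u_n)=(\gamma_{p,s}-1)B(u_c)<0.
\]
Here we only need $\liminf A(u_n)\ge0$, so no convergence of $A(u_n)$ is required.
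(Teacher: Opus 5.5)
Your proposal is correct and follows essentially the same route as the paper. You apply Lemma~\ref{Lem3.5} to the family of singletons to obtain a Palais--Smale sequence in \(\mathfrak{P}_{\alpha,c}\cap S_{c,\mathrm{rad}}\) at level \(m_1(c,\alpha)\), get boundedness from coercivity, prove \(u_c\neq0\) via radial compactness and Lemma~\ref{Lem3.1}(3), use \(\lambda_n c^2=(\gamma_{p,s}-1)\int_{\mathbb{R}^N}(I_\mu*|u_n|^p)|u_n|^p\,dx-\alpha\int_{\mathbb{R}^N}(I_\mu*|u_n|^{\frac{2N-\mu}{N}})|u_n|^{\frac{2N-\mu}{N}}\,dx\) for the sign of \(\lambda_c\), and pass to the limit with Proposition~\ref{Pro2.1}; the only cosmetic differences from the paper are that you bound \(\lambda_c\) through the positivity of the limiting \(p\)-term rather than through the uniform lower bound on \(\|u_n\|^2\), and that you invoke symmetric criticality along the sequence rather than only at the limit \(u_c\).
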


\begin{proof}
We apply Lemma \ref{Lem3.5} with \(\mathcal F\) equal to the family of singletons in \(S_{c,\mathrm{rad}}\). In this case,
\[
e_{\mathcal F}
=
\inf_{u\in S_{c,\mathrm{rad}}}\widehat{J}_\alpha(u).
\]
By Lemmas \ref{Lem3.1}(2) and \ref{Lem3.2},
\[
e_{\mathcal F}
=
\inf_{u\in S_{c,\mathrm{rad}}}J_\alpha(t_u\star u)
=
\inf_{u\in\mathfrak{P}_{\alpha,c}\cap S_{c,\mathrm{rad}}}J_\alpha(u)
=
m_1(c,\alpha).
\]
Hence there exists a Palais--Smale sequence
\[
\{u_n\}\subset \mathfrak{P}_{\alpha,c}\cap S_{c,\mathrm{rad}}
\]
at level \(m_1(c,\alpha)\).

Since \(J_\alpha\) is coercive on \(\mathfrak{P}_{\alpha,c}\), the sequence \(\{u_n\}\) is bounded in \(H^s(\mathbb{R}^N)\). Therefore, up to a subsequence,
\[
u_n\rightharpoonup u_c \quad\text{in }H^s_{\mathrm{rad}}(\mathbb{R}^N),
\]
\[
u_n\to u_c \quad\text{in }L^r(\mathbb{R}^N)\ \text{for every }2<r<2_s^*,
\]
and
\[
u_n(x)\to u_c(x)\quad\text{for a.e. }x\in\mathbb{R}^N.
\]

By the Lagrange multiplier rule, there exists a sequence \(\{\lambda_n\}\subset\mathbb{R}\) such that
\begin{equation}\label{eq3.16}
\begin{aligned}
&\int_{\mathbb{R}^N}(-\Delta)^{\frac{s}{2}}u_n(-\Delta)^{\frac{s}{2}}v\,dx
-\lambda_n\int_{\mathbb{R}^N}u_n v\,dx \\
&\quad
-\alpha\int_{\mathbb{R}^N}(I_\mu*|u_n|^{\frac{2N-\mu}{N}})|u_n|^{\frac{2N-\mu}{N}-2}u_n v\,dx
-\int_{\mathbb{R}^N}(I_\mu*|u_n|^p)|u_n|^{p-2}u_n v\,dx
=o_n(1)
\end{aligned}
\end{equation}
for every \(v\in H^s_{\mathrm{rad}}(\mathbb{R}^N)\).

Taking \(v=u_n\) in \eqref{eq3.16}, using \(P_\alpha(u_n)=0\), we obtain
\[
\begin{aligned}
\lambda_n c^2
&=
\|u_n\|^2
-\alpha\int_{\mathbb{R}^N}(I_\mu*|u_n|^{\frac{2N-\mu}{N}})|u_n|^{\frac{2N-\mu}{N}}\,dx
-\int_{\mathbb{R}^N}(I_\mu*|u_n|^p)|u_n|^p\,dx
+o_n(1) \\
&=
-\alpha\int_{\mathbb{R}^N}(I_\mu*|u_n|^{\frac{2N-\mu}{N}})|u_n|^{\frac{2N-\mu}{N}}\,dx
+\left(1-\frac{1}{\gamma_{p,s}}\right)\|u_n\|^2
+o_n(1).
\end{aligned}
\]
Since \(p<2_{\mu,s}^*\), one has \(\gamma_{p,s}<1\), hence \(1-\frac{1}{\gamma_{p,s}}<0\). Using Lemma \ref{Lem3.1}(3), we conclude that, up to a subsequence,
\[
\lambda_n\to\lambda_c<0.
\]

We next show that \(u_c\neq 0\). Assume by contradiction that \(u_c=0\). Since
\[
\frac{2Np}{2N-\mu}\in(2,2_s^*),
\]
we have \(u_n\to 0\) in \(L^{\frac{2Np}{2N-\mu}}(\mathbb{R}^N)\). Then, by the HLS inequality,
\[
\int_{\mathbb{R}^N}(I_\mu*|u_n|^p)|u_n|^p\,dx=o_n(1).
\]
Since \(P_\alpha(u_n)=0\), it follows that \(\|u_n\|^2=o_n(1)\), contradicting Lemma \ref{Lem3.1}(3). Therefore \(u_c\neq 0\).

Finally, passing to the limit in \eqref{eq3.16} by Proposition \ref{Pro2.1}, applied with exponents \(p\) and \(\frac{2N-\mu}{N}\), we obtain that \(u_c\) satisfies \eqref{eq3.15} in the weak sense. Since the functional is \(O(N)\)-invariant, by the principle of symmetric criticality, \(u_c\) is a weak solution in \(H^s(\mathbb{R}^N)\).
\end{proof}

The corresponding action functional associated with \eqref{eq3.15} is
\[
\begin{aligned}
I_\alpha(u)
&=
\frac{1}{2}\|u\|^2
-\frac{\lambda_c}{2}\|u\|_2^2
-\frac{\alpha N}{2(2N-\mu)}
\int_{\mathbb{R}^N}(I_\mu*|u|^{\frac{2N-\mu}{N}})|u|^{\frac{2N-\mu}{N}}\,dx \\
&\quad
-\frac{1}{2p}
\int_{\mathbb{R}^N}(I_\mu*|u|^p)|u|^p\,dx
=
J_\alpha(u)-\frac{\lambda_c}{2}\|u\|_2^2.
\end{aligned}
\]
We also define the Nehari manifold
\[
\mathcal{N}_c
=
\left\{
u\in H^s_{\mathrm{rad}}(\mathbb{R}^N)\setminus\{0\}:
N_c(u)=0
\right\},
\]
where
\[
N_c(u)
=
\|u\|^2
-\lambda_c\|u\|_2^2
-\alpha\int_{\mathbb{R}^N}(I_\mu*|u|^{\frac{2N-\mu}{N}})|u|^{\frac{2N-\mu}{N}}\,dx
-\int_{\mathbb{R}^N}(I_\mu*|u|^p)|u|^p\,dx,
\]
and
\[
m_2(c,\alpha)
=
\inf_{u\in\mathcal{N}_c}I_\alpha(u).
\]
By Lemma \ref{Lem3.6}, to conclude that \(u_c\) is a ground state, it remains to prove the strong convergence of the Palais--Smale sequence.

\begin{Lem}\label{Lem3.7}
Let \(\alpha>0\) and \(c>0\). Let \(\{u_n\}\), \(u_c\), and \(\lambda_c\) be given by Lemma~\ref{Lem3.6}. If
\[
m_1(c,\alpha)-\frac{\lambda_c}{2}c^2
<
m_2(c,\alpha)
+
\frac{N-\mu}{2(2N-\mu)}
\alpha^{-\frac{N}{N-\mu}}
(-\lambda_cS_\mu)^{\frac{2N-\mu}{N-\mu}},
\]
then, up to a subsequence,
\[
u_n\to u_c
\qquad\text{strongly in }H^s_{\mathrm{rad}}(\mathbb{R}^N).
\]
\end{Lem}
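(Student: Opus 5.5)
Set $w_n=u_n-u_c$. We write $A(u)=\int(I_\mu*|u|^{\frac{2N-\mu}{N}})|u|^{\frac{2N-\mu}{N}}dx$ and $B(u)=\int(I_\mu*|u|^p)|u|^p dx$. The aim is to show $w_n\to0$ in $H^s$.

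\textbf{Step 1: decomposition of the energy.} Since $u_n\rightharpoonup u_c$, we have
\[
\|u_n\|^2=\|u_c\|^2+\|w_n\|^2+o_n(1),\qquad \|u_n\|_2^2=\|u_c\|_2^2+\|w_n\|_2^2+o_n(1).
\]
In the radial setting $u_n\to u_c$ in $L^{\frac{2Np}{2N-\mu}}$, so by HLS $B(w_n)\to0$ and $B(u_n)\to B(u_c)$. For the lower critical term, Proposition~\ref{Pro2.2} (with $p=\frac{2N-\mu}{N}$, where boundedness in $L^2$ suffices) gives $A(u_n)=A(u_c)+A(w_n)+o_n(1)$.

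\textbf{Step 2: limit equations.} Since $u_c$ solves \eqref{eq3.15}, $N_c(u_c)=0$, so $u_c\in\mathcal N_c$ and $I_\alpha(u_c)\ge m_2(c,\alpha)$. Testing \eqref{eq3.16} with $u_n$, using $\lambda_n\to\lambda_c$, and subtracting $N_c(u_c)=0$ yields
\[
\|w_n\|^2-\lambda_c\|w_n\|_2^2-\alpha A(w_n)=o_n(1).
\]
Also $I_\alpha(u_n)=J_\alpha(u_n)-\frac{\lambda_c}{2}c^2+o_n(1)\to m_1(c,\alpha)-\frac{\lambda_c}{2}c^2$. Combined with the splitting of Step 1, this gives
\[
m_1(c,\alpha)-\frac{\lambda_c}{2}c^2=I_\alpha(u_c)+\frac12\|w_n\|^2-\frac{\lambda_c}{2}\|w_n\|_2^2-\frac{\alpha N}{2(2N-\mu)}A(w_n)+o_n(1).
\]

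\textbf{Step 3: dichotomy for $w_n$.} Pass to a subsequence with $\|w_n\|^2\to a$, $-\lambda_c\|w_n\|_2^2\to b$, $\alpha A(w_n)\to \ell$; then $a+b=\ell$. By \eqref{eq2.5},
\[
\ell\le \alpha S_\mu^{-\frac{2N-\mu}{N}}\Bigl(\frac{b}{-\lambda_c}\Bigr)^{\frac{2N-\mu}{N}},
\]
and since $b\le \ell$,
\[
b\le \alpha S_\mu^{-\frac{2N-\mu}{N}}(-\lambda_c)^{-\frac{2N-\mu}{N}}\, b^{\frac{2N-\mu}{N}}.
\]
Hence either $b=0$, or $b^{\frac{N-\mu}{N}}\ge \alpha^{-1}S_\mu^{\frac{2N-\mu}{N}}(-\lambda_c)^{\frac{2N-\mu}{N}}$, that is, $b\ge \alpha^{-\frac{N}{N-\mu}}S_\mu^{\frac{2N-\mu}{N-\mu}}(-\lambda_c)^{\frac{2N-\mu}{N-\mu}}$.

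If $b=0$, then $\ell\le$ const$\cdot b^{(2N-\mu)/N}=0$, so $a=0$ and $w_n\to0$ in $H^s$ (both the homogeneous norm and the $L^2$ norm vanish, using $\lambda_c<0$).

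In the second case, the tail energy in Step 2 converges to
\[
\frac{a+b}{2}-\frac{N}{2(2N-\mu)}\ell=\Bigl(\frac12-\frac{N}{2(2N-\mu)}\Bigr)(a+b)=\frac{N-\mu}{2(2N-\mu)}(a+b)\ge \frac{N-\mu}{2(2N-\mu)}\, b.
\]
Together with $I_\alpha(u_c)\ge m_2(c,\alpha)$ and the lower bound on $b$, this gives
\[
m_1(c,\alpha)-\frac{\lambda_c}{2}c^2\ge m_2(c,\alpha)+\frac{N-\mu}{2(2N-\mu)}\alpha^{-\frac{N}{N-\mu}}(-\lambda_cS_\mu)^{\frac{2N-\mu}{N-\mu}},
\]
which contradicts the hypothesis of Lemma~\ref{Lem3.7}. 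Hence $b=0$ and $u_n\to u_c$ strongly.

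\textbf{Main obstacle.} The delicate point is the Brezis--Lieb splitting of the lower critical term $A$ in Step 1. That term is only $L^2$-controlled and is not compact even for radial functions, since mass can vanish by spreading. Proposition~\ref{Pro2.2} handles this because $\frac{2Np}{2N-\mu}=2$ at that exponent. A second check is that testing \eqref{eq3.16} with $u_n$ is legitimate: $u_n$ is bounded and the $o_n(1)$ there is uniform for bounded test functions.
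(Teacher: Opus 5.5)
Your proof is correct and follows the paper's overall structure: the splitting of $w_n=u_n-u_c$, the relation for $w_n$ obtained from $N_c(u_n)=o_n(1)$ and $N_c(u_c)=0$, a dichotomy from the sharp inequality \eqref{eq2.5}, and the energy contradiction. The one genuine difference is how you handle the term $\|w_n\|^2$.

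The paper first uses the Pohozaev identity $P_\alpha(u_c)=0$ for the weak limit. Combining it with $P_\alpha(u_n)=0$ and $\int_{\mathbb{R}^N}(I_\mu*|w_n|^p)|w_n|^p\,dx\to0$ gives $\|w_n\|^2=o_n(1)$. Only then does the paper run the dichotomy on the common limit $l$ of $-\lambda_c\|w_n\|_2^2$ and $\alpha A(w_n)$.

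You instead keep $a=\lim\|w_n\|^2$ as an unknown. You run the same dichotomy on $b$ using $a+b=\ell$ and $b\le\ell$. You then absorb $a$ through $\frac{N-\mu}{2(2N-\mu)}(a+b)\ge\frac{N-\mu}{2(2N-\mu)}\,b$ in the energy estimate, and in the case $b=0$ you recover $a=0$ from $\ell=0$.

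Your route needs less. It never uses the Pohozaev identity for the limit equation \eqref{eq3.15}. For fractional Choquard equations that identity requires regularity and decay information on $u_c$, which the paper takes for granted. Your argument needs only the Nehari-type identity, which follows from testing the weak formulation with $u_c$.

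The paper's route yields one extra fact in exchange. It shows $\|w_n\|\to0$ before the energy hypothesis is used at all. So any loss of compactness is purely a loss of $L^2$ mass, caused by the lower critical term through spreading.
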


\begin{proof}
Set
\[
w_n=u_n-u_c.
\]
Since \(u_n\rightharpoonup u_c\) in \(H^s_{\mathrm{rad}}(\mathbb{R}^N)\), one has
\begin{equation}\label{eq3.17}
\|u_n\|^2=\|w_n\|^2+\|u_c\|^2+o_n(1),
\qquad
\|u_n\|_2^2=\|w_n\|_2^2+\|u_c\|_2^2+o_n(1).
\end{equation}
By Proposition~\ref{Pro2.2}, applied with exponents \(p\) and \(\frac{2N-\mu}{N}\),
\begin{equation}\label{eq3.18}
\int_{\mathbb{R}^N}(I_\mu*|u_n|^p)|u_n|^p\,dx
=
\int_{\mathbb{R}^N}(I_\mu*|w_n|^p)|w_n|^p\,dx
+
\int_{\mathbb{R}^N}(I_\mu*|u_c|^p)|u_c|^p\,dx
+o_n(1),
\end{equation}
and
\begin{equation}\label{eq3.19}
\begin{aligned}
\int_{\mathbb{R}^N}(I_\mu*|u_n|^{\frac{2N-\mu}{N}})|u_n|^{\frac{2N-\mu}{N}}\,dx
&=
\int_{\mathbb{R}^N}(I_\mu*|w_n|^{\frac{2N-\mu}{N}})|w_n|^{\frac{2N-\mu}{N}}\,dx \\
&\quad
+
\int_{\mathbb{R}^N}(I_\mu*|u_c|^{\frac{2N-\mu}{N}})|u_c|^{\frac{2N-\mu}{N}}\,dx
+o_n(1).
\end{aligned}
\end{equation}

Since \(u_c\) is a weak solution of \eqref{eq3.15}, one has \(u_c\in\mathcal{N}_c\). Moreover, by the Pohozaev identity,
\[
P_\alpha(u_c)=0.
\]
Using \(P_\alpha(u_n)=0\), \eqref{eq3.17}, and \eqref{eq3.18}, we get
\[
P_\alpha(w_n)=o_n(1).
\]
On the other hand, since
\[
\frac{2Np}{2N-\mu}\in(2,2_s^*),
\]
the radial compact embedding gives \(w_n\to 0\) in \(L^{\frac{2Np}{2N-\mu}}(\mathbb{R}^N)\), hence
\[
\int_{\mathbb{R}^N}(I_\mu*|w_n|^p)|w_n|^p\,dx=o_n(1).
\]
Therefore
\begin{equation}\label{eq3.20}
\|w_n\|^2=o_n(1).
\end{equation}

Next, from the Euler--Lagrange equation for \(u_n\) and the convergence \(\lambda_n\to\lambda_c\), we have
\[
N_c(u_n)=o_n(1).
\]
Since \(N_c(u_c)=0\), \eqref{eq3.17}, \eqref{eq3.18}, and \eqref{eq3.19} yield
\[
N_c(w_n)=o_n(1).
\]
Using \eqref{eq3.20}, we infer
\[
-\lambda_c\|w_n\|_2^2
-
\alpha
\int_{\mathbb{R}^N}(I_\mu*|w_n|^{\frac{2N-\mu}{N}})|w_n|^{\frac{2N-\mu}{N}}\,dx
=
o_n(1).
\]

Passing to a subsequence if necessary, we may assume that
\[
-\lambda_c\|w_n\|_2^2\to l
\qquad\text{and}\qquad
\alpha\int_{\mathbb{R}^N}(I_\mu*|w_n|^{\frac{2N-\mu}{N}})|w_n|^{\frac{2N-\mu}{N}}\,dx\to l
\]
for some \(l\ge 0\).

By \eqref{eq2.5}, either \(l=0\) or
\[
l
\ge
\alpha^{-\frac{N}{N-\mu}}
(-\lambda_cS_\mu)^{\frac{2N-\mu}{N-\mu}}.
\]

If \(l=0\), then \(\|w_n\|_2\to 0\). Together with \eqref{eq3.20}, this gives
\[
u_n\to u_c
\qquad\text{strongly in }H^s_{\mathrm{rad}}(\mathbb{R}^N).
\]

Assume now that
\[
l
\ge
\alpha^{-\frac{N}{N-\mu}}
(-\lambda_cS_\mu)^{\frac{2N-\mu}{N-\mu}}.
\]
Since \(u_c\in\mathcal{N}_c\), we have \(I_\alpha(u_c)\ge m_2(c,\alpha)\). Using \eqref{eq3.17}, \eqref{eq3.18}, \eqref{eq3.19}, and \eqref{eq3.20}, we compute
\[
\begin{aligned}
m_1(c,\alpha)-\frac{\lambda_c}{2}c^2
&=
I_\alpha(u_n)+o_n(1) \\
&=
I_\alpha(u_c)
-\frac{\lambda_c}{2}\|w_n\|_2^2
-\frac{\alpha N}{2(2N-\mu)}
\int_{\mathbb{R}^N}(I_\mu*|w_n|^{\frac{2N-\mu}{N}})|w_n|^{\frac{2N-\mu}{N}}\,dx
+o_n(1) \\
&\ge
m_2(c,\alpha)
+
\left(
\frac{1}{2}-\frac{N}{2(2N-\mu)}
\right)l
+o_n(1) \\
&=
m_2(c,\alpha)
+
\frac{N-\mu}{2(2N-\mu)}\,l
+o_n(1) \\
&\ge
m_2(c,\alpha)
+
\frac{N-\mu}{2(2N-\mu)}
\alpha^{-\frac{N}{N-\mu}}
(-\lambda_cS_\mu)^{\frac{2N-\mu}{N-\mu}},
\end{aligned}
\]
which contradicts the assumption of the lemma. Therefore \(l=0\), and the strong convergence follows.
\end{proof}

\section{Existence of ground state solutions}

In this section we establish the key estimates for \(m_1(c,\alpha)\), \(\lambda_c\), and \(m_2(c,\alpha)\), and then prove Theorems~\ref{Thm1.2} and \ref{Thm1.3}.

Throughout this section, we assume that
\[
N>2s,
\qquad
2+\frac{2s-\mu}{N}<p<2_{\mu,s}^*=\frac{2N-\mu}{N-2s},
\qquad
\alpha>0,
\qquad
c>0.
\]

We also set
\[
a=\frac{2(p\gamma_{p,s}-p)}{p\gamma_{p,s}-1},
\qquad
b=\frac{2(2N-\mu)}{N},
\qquad
A_\mu=\frac{\alpha N}{2(2N-\mu)}S_\mu^{-\frac{2N-\mu}{N}}.
\]
Since
\[
2+\frac{2s-\mu}{N}<p<2_{\mu,s}^*,
\]
one has
\[
\frac{1}{p}<\gamma_{p,s}<1,
\qquad
p\gamma_{p,s}>1,
\qquad
a<0<b.
\]

\begin{Lem}\label{Lem4.1}
Let \(\{u_n\}\subset \mathfrak{P}_{\alpha,c}\cap S_{c,\mathrm{rad}}\) be the Palais--Smale sequence obtained in Lemma~\ref{Lem3.6}. Then, for \(n\) sufficiently large,
\begin{equation}\label{eq4.1}
\|u_n\|^2\sim c^{a},
\end{equation}
and
\begin{equation}\label{eq4.2}
m_1(c,\alpha)+A_\mu c^{b}\sim c^{a}.
\end{equation}
\end{Lem}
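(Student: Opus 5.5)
The proof splits into a lower and an upper bound for each of \eqref{eq4.1} and \eqref{eq4.2}. The organizing identity is the one from Lemma~\ref{Lem3.1}(2): for every \(u\in\mathfrak{P}_{\alpha,c}\),
\[
J_\alpha(u)=\Bigl(\tfrac12-\tfrac{1}{2p\gamma_{p,s}}\Bigr)\|u\|^2-\tfrac{\alpha N}{2(2N-\mu)}\int_{\R^N}(I_\mu*|u|^{\frac{2N-\mu}{N}})|u|^{\frac{2N-\mu}{N}}\,dx .
\]
By \eqref{eq2.5} the nonlocal term is at most \(A_\mu c^b\). Hence, with \(\kappa=\frac12-\frac{1}{2p\gamma_{p,s}}>0\),
\[
J_\alpha(u)+A_\mu c^b\ge \kappa\|u\|^2 .
\]

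\textbf{Lower bounds.} Lemma~\ref{Lem3.1}(3) gives \(\|u\|^2\gtrsim c^{\frac{2p(\gamma_{p,s}-1)}{p\gamma_{p,s}-1}}=c^a\) on \(\mathfrak{P}_{\alpha,c}\). This yields \(\|u_n\|^2\gtrsim c^a\) for all \(n\). It also gives \(m_1(c,\alpha)+A_\mu c^b\ge\kappa\inf_{\mathfrak P_{\alpha,c}}\|u\|^2\gtrsim c^a\), which is the lower half of \eqref{eq4.2}.

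\textbf{Upper bound in \eqref{eq4.2}.} Using \(m_1(c,\alpha)\le J_\alpha(t_{v_c}\star v_c)\) as in Lemma~\ref{Lem3.1}(4) alone is too crude, since it drops the \(A_\mu c^b\) term. Instead I test with a scaled optimizer of \(S_\mu\). Take \(v=V_{1,0}\) from \eqref{eq2.7}, normalized so that \(\|v\|_2=1\), and set \(v_c=cv\). This \(v\) lies in \(H^s\): its decay is \(|x|^{-N}\), and its Fourier transform decays exponentially, so \(\|v\|<\infty\). It also attains equality in \eqref{eq2.5}, so
\[
\tfrac{\alpha N}{2(2N-\mu)}\int_{\R^N}(I_\mu*|v_c|^{\frac{2N-\mu}{N}})|v_c|^{\frac{2N-\mu}{N}}\,dx=A_\mu c^b .
\]
Moreover \(t\star v_c\) attains equality for every \(t\), because the optimizer family is scale invariant. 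The explicit computation in Lemma~\ref{Lem3.1}(4) then gives
\[
m_1+A_\mu c^b\le \kappa\, e^{2st_{v_c}}c^2\|v\|^2=C(v)\,c^{-\frac{2(p-1)}{p\gamma_{p,s}-1}+2}=C c^a,
\]
which is the upper half of \eqref{eq4.2}.

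\textbf{Upper bound in \eqref{eq4.1}.} Since \(J_\alpha(u_n)\to m_1(c,\alpha)\), for large \(n\) we have \(J_\alpha(u_n)\le m_1+o(1)\). The inequality \(\kappa\|u_n\|^2\le J_\alpha(u_n)+A_\mu c^b\) then gives \(\|u_n\|^2\lesssim c^a+o_n(1)\). For \(n\) large (depending on \(c\)) the \(o_n(1)\) is absorbed, for instance by taking it smaller than \(c^a\). This yields \(\|u_n\|^2\sim c^a\).

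\textbf{Main obstacle.} The delicate step is the upper bound in \eqref{eq4.2}: one must use a test function that saturates the sharp lower-critical HLS inequality \eqref{eq2.5}, and check that it belongs to \(H^s\). If regularity of \(V_{1,0}\) in \(H^s\) were an issue, I would fall back on smooth truncations of \(V_{\varepsilon,0}\) whose HLS defect is \(o(c^a)\). That route, however, would only give the estimate for \(c\) in compact ranges, so I would first verify the Fourier decay of \(V_{1,0}\).
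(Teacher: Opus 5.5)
Your proposal is correct and follows essentially the same route as the paper. The lower bounds come from the Pohozaev identity combined with \eqref{eq2.5} and Lemma~\ref{Lem3.1}(3). The upper bound in \eqref{eq4.2} comes from the scaled $S_\mu$-optimizer $v_c=c\phi$, for which $e^{2st_{v_c}}\|v_c\|^2\sim c^a$, and this in turn gives $\|u_n\|^2\lesssim c^a$. Your explicit check that $V_{1,0}\in H^s(\R^N)$ fills in a point the paper takes for granted; indeed $V_{1,0}\in H^1$, since $|\nabla V_{1,0}|\lesssim (1+|x|)^{-N-1}$.
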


\begin{proof}
Since \(u_n\in \mathfrak{P}_{\alpha,c}\), one has
\[
\|u_n\|^2
=
\gamma_{p,s}\int_{\mathbb{R}^N}(I_\mu*|u_n|^p)|u_n|^p\,dx.
\]
Hence
\[
\begin{aligned}
J_\alpha(u_n)
&=
\frac{1}{2}\|u_n\|^2
-\frac{\alpha N}{2(2N-\mu)}
\int_{\mathbb{R}^N}(I_\mu*|u_n|^{\frac{2N-\mu}{N}})|u_n|^{\frac{2N-\mu}{N}}\,dx
-\frac{1}{2p\gamma_{p,s}}\|u_n\|^2 \\
&=
\left(\frac{1}{2}-\frac{1}{2p\gamma_{p,s}}\right)\|u_n\|^2
-\frac{\alpha N}{2(2N-\mu)}
\int_{\mathbb{R}^N}(I_\mu*|u_n|^{\frac{2N-\mu}{N}})|u_n|^{\frac{2N-\mu}{N}}\,dx.
\end{aligned}
\]
Since \(J_\alpha(u_n)\to m_1(c,\alpha)\), by \eqref{eq2.5} we obtain
\begin{equation}\label{eq4.3}
m_1(c,\alpha)+A_\mu c^b
\ge
\left(\frac{1}{2}-\frac{1}{2p\gamma_{p,s}}\right)\|u_n\|^2+o_n(1).
\end{equation}
Using Lemma~\ref{Lem3.1}(3), this gives
\[
m_1(c,\alpha)+A_\mu c^b\gtrsim c^a.
\]

For the reverse estimate, let \(\phi\in S_{1,\mathrm{rad}}\) be a fixed optimizer in \eqref{eq2.5}, and set \(v_c=c\phi\in S_{c,\mathrm{rad}}\). Then
\[
\int_{\mathbb{R}^N}(I_\mu*|v_c|^{\frac{2N-\mu}{N}})|v_c|^{\frac{2N-\mu}{N}}\,dx
=
S_\mu^{-\frac{2N-\mu}{N}}c^b.
\]
By Lemma~\ref{Lem3.1}(1),
\[
m_1(c,\alpha)\le J_\alpha(t_{v_c}\star v_c).
\]
Therefore
\[
m_1(c,\alpha)+A_\mu c^b
\le
J_\alpha(t_{v_c}\star v_c)+A_\mu c^b
=
\left(\frac{1}{2}-\frac{1}{2p\gamma_{p,s}}\right)e^{2st_{v_c}}\|v_c\|^2.
\]
By \eqref{eq3.2},
\[
e^{2st_{v_c}}\|v_c\|^2\sim c^a.
\]
Hence
\[
m_1(c,\alpha)+A_\mu c^b\lesssim c^a.
\]
This proves \eqref{eq4.2}. Combining \eqref{eq4.2} with \eqref{eq4.3}, we obtain
\[
\|u_n\|^2\lesssim c^a.
\]
Together with Lemma~\ref{Lem3.1}(3), this yields \eqref{eq4.1}.
\end{proof}

\begin{Lem}\label{Lem4.2}
Let \(\{u_n\}\), \(u_c\), and \(\lambda_c\) be given by Lemma~\ref{Lem3.6}. Then there exists \(\delta>0\) such that the condition of Lemma~\ref{Lem3.7} holds whenever
\[
\mathcal M_1(c)<\delta.
\]
Moreover,
\begin{equation}\label{eq4.4}
-\lambda_c\sim c^{a-2}
=
c^{\frac{2(1-p)}{p\gamma_{p,s}-1}}.
\end{equation}
\end{Lem}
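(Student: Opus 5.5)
We first estimate \(\lambda_c\), using the identity for \(\lambda_n c^2\) from the proof of Lemma~\ref{Lem3.6}:
\[
-\lambda_n c^2=\alpha\int_{\R^N}(I_\mu*|u_n|^{\frac{2N-\mu}{N}})|u_n|^{\frac{2N-\mu}{N}}\,dx+\Bigl(\frac{1}{\gamma_{p,s}}-1\Bigr)\|u_n\|^2+o_n(1).
\]
Both terms are nonnegative. By \eqref{eq2.5}, the first is at most \(\alpha S_\mu^{-\frac{2N-\mu}{N}}c^b\). By Lemma~\ref{Lem4.1}, the second is \(\sim c^a\). Passing to the limit gives
\[
c^a\lesssim -\lambda_c c^2\lesssim c^a+c^b .
\]
Now compare exponents. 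A direct computation gives
\[
b-a=\frac{2(2N-\mu)}{N}+\frac{2(p-p\gamma_{p,s})}{p\gamma_{p,s}-1}>0 .
\]
We also need to relate \(c^{b-a}\) to \(\mathcal M_1(c)=c^{\frac{2p(1-\gamma_{p,s})}{p\gamma_{p,s}-1}}\). The exponent of \(\mathcal M_1\) is positive, so \(c^{b-a}\) is a positive power of \(\mathcal M_1(c)\), that is, \(c^{b-a}=\mathcal M_1(c)^{\kappa}\) with \(\kappa>0\). Hence \(\mathcal M_1(c)<\delta\) forces \(c\) small and \(c^b\le c^a\). This yields \(-\lambda_c\sim c^{a-2}\), which is \eqref{eq4.4}. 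Finally, \(a-2=\frac{2(1-p)}{p\gamma_{p,s}-1}\) by direct algebra.

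Next we verify the strict inequality of Lemma~\ref{Lem3.7}. The left side is
\[
m_1(c,\alpha)-\tfrac{\lambda_c}{2}c^2\lesssim c^a+c^a\lesssim c^a,
\]
using Lemma~\ref{Lem3.1}(4) and the estimate above. For the right side, we first check that \(m_2(c,\alpha)\ge 0\). On \(\mathcal N_c\) we have
\[
I_\alpha(u)=\Bigl(\tfrac12-\tfrac{N}{2(2N-\mu)}\Bigr)\alpha\!\int(I_\mu*|u|^{\frac{2N-\mu}{N}})|u|^{\frac{2N-\mu}{N}}+\Bigl(\tfrac12-\tfrac1{2p}\Bigr)\!\int(I_\mu*|u|^p)|u|^p\ge 0,
\]
because \(2N-\mu>N\) and \(p>1\). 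So \(m_2\ge0\) and it suffices to beat the threshold term. By the lower bound \(-\lambda_c\gtrsim c^{a-2}\), this term is at least
\[
C\,\alpha^{-\frac{N}{N-\mu}}\,c^{(a-2)\frac{2N-\mu}{N-\mu}} .
\]

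The main obstacle is the exponent comparison. We need
\[
(a-2)\frac{2N-\mu}{N-\mu}<a
\]
in the small-\(\mathcal M_1\) regime, so that the threshold term dominates \(c^a\) as \(c\to0\). Equivalently, \(a\frac{N}{N-\mu}<2\frac{2N-\mu}{N-\mu}\), i.e. \(aN<2(2N-\mu)\). This holds because \(a<0\). Therefore the ratio of the right side to the left side is at least \(C c^{-\theta}\) for some \(\theta>0\). Writing \(c^{\theta}\) as a positive power of \(\mathcal M_1(c)\), we can choose \(\delta\) so that \(\mathcal M_1(c)<\delta\) makes the inequality strict. I would carefully double-check the sign of each exponent in terms of \(p\gamma_{p,s}>1\) and \(\gamma_{p,s}<1\), since that is where an error would break the argument.
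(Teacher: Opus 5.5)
Your proposal is correct and follows the paper's proof. It uses the same identity for $-\lambda_n c^2$ together with Lemma~\ref{Lem4.1} to get $-\lambda_c\sim c^{a-2}$, then shows the threshold term of order $c^{(a-2)\frac{2N-\mu}{N-\mu}}$ dominates the left side $\lesssim c^a$, since $aN<2(2N-\mu)$. The one difference is that you use the trivial bound $m_2(c,\alpha)\ge 0$ instead of the paper's uniform bound $m_2(c,\alpha)\ge\eta>0$ (which needs $-\lambda_c\ge 1$ and a Nehari-type estimate); this is a valid shortcut, because the threshold term alone already beats $c^a$.
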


\begin{proof}
Since \(\mathcal M_1(c)=c^{-a}\) and \(a<0\), the condition \(\mathcal M_1(c)<\delta\) means that \(c\) is sufficiently small.

Taking \(v=u_n\) in \eqref{eq3.16}, using \(P_\alpha(u_n)=0\), we obtain
\begin{equation}\label{eq4.5}
\lambda_n c^2
=
\left(1-\frac{1}{\gamma_{p,s}}\right)\|u_n\|^2
-\alpha\int_{\mathbb{R}^N}(I_\mu*|u_n|^{\frac{2N-\mu}{N}})|u_n|^{\frac{2N-\mu}{N}}\,dx
+o_n(1).
\end{equation}
By Lemma~\ref{Lem4.1},
\[
\|u_n\|^2\sim c^a.
\]
On the other hand, by \eqref{eq2.5},
\[
\int_{\mathbb{R}^N}(I_\mu*|u_n|^{\frac{2N-\mu}{N}})|u_n|^{\frac{2N-\mu}{N}}\,dx
\le
S_\mu^{-\frac{2N-\mu}{N}}c^b.
\]
Since \(a<0<b\), one has \(c^b=o(c^a)\) as \(c\to0\). Hence \eqref{eq4.5} gives
\[
-\lambda_n c^2\sim c^a.
\]
Passing to the limit, we obtain
\[
-\lambda_c c^2\sim c^a,
\]
which yields \eqref{eq4.4}.

In particular, since \(a-2<0\), there exists \(\delta_0>0\) such that
\[
\mathcal M_1(c)<\delta_0
\quad\Longrightarrow\quad
-\lambda_c\ge 1.
\]

We now verify the condition in Lemma~\ref{Lem3.7}. For \(u\in\mathcal N_c\), define
\[
A_p(u)
=
\int_{\mathbb{R}^N}(I_\mu*|u|^p)|u|^p\,dx,
\qquad
A_*(u)
=
\int_{\mathbb{R}^N}(I_\mu*|u|^{\frac{2N-\mu}{N}})|u|^{\frac{2N-\mu}{N}}\,dx,
\]
and
\[
B(u)=\|u\|^2-\lambda_c\|u\|_2^2.
\]
Since \(u\in\mathcal N_c\), one has
\[
B(u)=A_p(u)+\alpha A_*(u).
\]
Moreover, if \(\mathcal M_1(c)<\delta_0\), then \(-\lambda_c\ge 1\), and therefore
\[
B(u)=\|u\|^2+(-\lambda_c)\|u\|_2^2\ge \|u\|^2+\|u\|_2^2.
\]
In particular,
\[
\|u\|^2\le B(u),
\qquad
\|u\|_2^2\le B(u).
\]

By Lemma~\ref{Lem2.2},
\[
A_p(u)
\le
C_p\|u\|^{2p\gamma_{p,s}}\|u\|_2^{2p(1-\gamma_{p,s})}
\le
C_p B(u)^{p\gamma_{p,s}}.
\]
By \eqref{eq2.5},
\[
A_*(u)
\le
S_\mu^{-\frac{2N-\mu}{N}}\|u\|_2^{\frac{2(2N-\mu)}{N}}
\le
S_\mu^{-\frac{2N-\mu}{N}} B(u)^{\frac{2N-\mu}{N}}.
\]
Hence
\[
B(u)
=
A_p(u)+\alpha A_*(u)
\le
C_1 B(u)^{p\gamma_{p,s}}
+
C_2 B(u)^{\frac{2N-\mu}{N}},
\]
where \(C_1,C_2>0\) are independent of \(u\) and of \(c\), provided \(\mathcal M_1(c)<\delta_0\).

Since
\[
p\gamma_{p,s}>1,
\qquad
\frac{2N-\mu}{N}>1,
\]
there exists \(\rho>0\), independent of \(u\) and \(c\), such that
\[
0<t<\rho
\quad\Longrightarrow\quad
C_1 t^{p\gamma_{p,s}-1}+C_2 t^{\frac{N-\mu}{N}}<1.
\]
Therefore \(B(u)\ge \rho\) for every \(u\in\mathcal N_c\).

Next, using \(B(u)=A_p(u)+\alpha A_*(u)\), we compute
\[
\begin{aligned}
I_\alpha(u)
&=
\frac{1}{2}B(u)
-\frac{1}{2p}A_p(u)
-\frac{\alpha N}{2(2N-\mu)}A_*(u) \\
&=
\left(\frac{1}{2}-\frac{1}{2p}\right)A_p(u)
+
\frac{N-\mu}{2(2N-\mu)}\alpha A_*(u).
\end{aligned}
\]
Hence
\[
I_\alpha(u)
\ge
\min\left\{
\frac{1}{2}-\frac{1}{2p},
\frac{N-\mu}{2(2N-\mu)}
\right\}
\bigl(A_p(u)+\alpha A_*(u)\bigr).
\]
That is,
\[
I_\alpha(u)\ge C_0 B(u),
\]
where
\[
C_0=
\min\left\{
\frac{1}{2}-\frac{1}{2p},
\frac{N-\mu}{2(2N-\mu)}
\right\}>0.
\]
Since \(B(u)\ge \rho\), we conclude that
\begin{equation}\label{eq4.6}
m_2(c,\alpha)
=
\inf_{u\in\mathcal N_c}I_\alpha(u)
\ge
C_0\rho
= \eta >0
\end{equation}
whenever \(\mathcal M_1(c)<\delta_0\).

Finally, by Lemma~\ref{Lem4.1} and \eqref{eq4.4},
\[
m_1(c,\alpha)-\frac{\lambda_c}{2}c^2\lesssim c^a.
\]
On the other hand,
\[
\alpha^{-\frac{N}{N-\mu}}(-\lambda_c S_\mu)^{\frac{2N-\mu}{N-\mu}}
\sim
c^{(a-2)\frac{2N-\mu}{N-\mu}}
=
c^{\frac{2(1-p)}{p\gamma_{p,s}-1}\cdot\frac{2N-\mu}{N-\mu}}.
\]
Since
\[
(a-2)\frac{2N-\mu}{N-\mu}<a,
\]
the right-hand side dominates \(c^a\) as \(c\to0\). Therefore, using \eqref{eq4.6}, we can choose \(0<\delta\le \delta_0\) such that
\[
m_1(c,\alpha)-\frac{\lambda_c}{2}c^2
<
m_2(c,\alpha)
+
\frac{N-\mu}{2(2N-\mu)}
\alpha^{-\frac{N}{N-\mu}}
(-\lambda_c S_\mu)^{\frac{2N-\mu}{N-\mu}}
\]
whenever \(\mathcal M_1(c)<\delta\). This is exactly the condition of Lemma~\ref{Lem3.7}.
\end{proof}

\begin{Pro}\label{Pro4.1}
The set \(\mathfrak{P}_{\alpha,c}\) is a \(C^1\) submanifold of \(S_c\) of codimension \(1\). Moreover, it is a natural constraint for \(J_\alpha\) on \(S_c\).
\end{Pro}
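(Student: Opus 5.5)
We follow the Bartsch--Soave scheme: first show that $\mathfrak{P}_{\alpha,c}$ consists only of nondegenerate points, then deduce the manifold structure from the implicit function theorem, and finally use the $\star$-scaling to prove that $\mathfrak{P}_{\alpha,c}$ is a natural constraint.

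\textbf{Step 1: $\mathfrak{P}_{\alpha,c}^0=\emptyset$ and $\mathfrak{P}_{\alpha,c}\neq\emptyset$.} Write $\mathfrak{P}_{\alpha,c}=\{u\in H^s:G(u)=0,\ P_\alpha(u)=0\}$ with $G(u)=\|u\|_2^2-c^2$. Both $G$ and $P_\alpha$ are $C^1$ on $H^s$, by Lemma~\ref{Lem2.1} and Lemma~\ref{Lem2.2}. For $u\in\mathfrak{P}_{\alpha,c}$ we have $\|u\|^2=\gamma_{p,s}A_p(u)$, where $A_p(u)=\int_{\mathbb{R}^N}(I_\mu*|u|^p)|u|^p\,dx$. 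Hence
\[
E_u''(0)=2s^2\gamma_{p,s}(1-p\gamma_{p,s})A_p(u)<0 .
\]
Here $A_p(u)>0$ because Lemma~\ref{Lem3.1}(3) gives $\|u\|>0$. So $\mathfrak{P}_{\alpha,c}=\mathfrak{P}_{\alpha,c}^-$, and Lemma~\ref{Lem3.1}(1) shows it is nonempty.

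\textbf{Step 2: submanifold of codimension $1$.} We must show that $dG(u)$ and $dP_\alpha(u)$ are linearly independent at each $u\in\mathfrak{P}_{\alpha,c}$. It suffices to show that $dP_\alpha(u)$ does not vanish on $T_uS_c$. Suppose instead $dP_\alpha(u)=\nu\, dG(u)$ for some $\nu\in\mathbb{R}$. Then $u$ solves
\[
2s(-\Delta)^s u-2ps\gamma_{p,s}(I_\mu*|u|^p)|u|^{p-2}u=2\nu u .
\]
This is a Choquard equation with only the $p$-term, and its Pohozaev identity reads $2s\|u\|^2-2ps\gamma_{p,s}^2A_p(u)=0$. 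Equivalently, one can differentiate $t\mapsto P_\alpha(t\star u)$ at $t=0$, noting that the derivative of $t\star u$ is tangent to $S_c$. Either way we get $E_u''(0)=0$, contradicting Step 1.

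Justifying this Pohozaev identity for $H^s$ weak solutions is the main technical point. I would take it from the standard fractional Choquard literature, the same source used in Remark~\ref{Rek2.2}. The implicit function theorem then gives that $\mathfrak{P}_{\alpha,c}$ is a $C^1$ submanifold of codimension $2$ in $H^s$, hence of codimension $1$ in $S_c$.

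\textbf{Step 3: natural constraint.} Let $u\in\mathfrak{P}_{\alpha,c}$ be a critical point of $J_\alpha|_{\mathfrak{P}_{\alpha,c}}$. By the Lagrange multiplier rule there exist $\lambda,\nu\in\mathbb{R}$ with
\[
dJ_\alpha(u)-\lambda\, dG(u)-\nu\, dP_\alpha(u)=0 .
\]
We need to show $\nu=0$. Then $u$ is a critical point of $J_\alpha|_{S_c}$, and by Remark~\ref{Rek2.2} this is exactly the natural constraint property.

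To get $\nu=0$, note that $u$ is a weak solution of an equation in which the fractional Laplacian carries coefficient $(1-2s\nu)$, the $p$-term carries coefficient $(1-2ps\gamma_{p,s}\nu)$, and the lower critical term keeps coefficient $\alpha$. The lower critical term is invariant under $\star$, so it drops out of the Pohozaev identity for this equation. That identity therefore gives
\[
s(1-2s\nu)\|u\|^2-s\gamma_{p,s}(1-2ps\gamma_{p,s}\nu)A_p(u)=0 .
\]
Subtracting $P_\alpha(u)=0$ leaves $\nu\,E_u''(0)=0$, up to a positive constant. Since $E_u''(0)<0$ by Step 1, we conclude $\nu=0$.

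The delicate step is again the Pohozaev identity for the perturbed equation. Alternatively, one can use the scaling identity $\frac{d}{dt}J_\alpha(t\star u)\big|_{t=0}=P_\alpha(u)$, together with $\frac{d}{dt}P_\alpha(t\star u)\big|_{t=0}=E_u''(0)$, to obtain $\nu E_u''(0)=0$ directly. This avoids regularity issues provided the derivative of $t\star u$ is admissible, for which I would use density of smooth functions and a limiting argument.
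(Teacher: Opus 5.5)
Your proposal is correct and is essentially the paper's argument. The paper shows $dP_\alpha(u)[\zeta_u]=E_u''(0)=2s^2(1-p\gamma_{p,s})\|u\|^2<0$ for $\zeta_u=\frac{d}{dt}\big|_{t=0}(t\star u)$ to get the regular value, and then evaluates $d(J_\alpha|_{S_c})(u)=\nu\, d(P_\alpha|_{S_c})(u)$ at $\zeta_u$ to get $\nu E_u''(0)=0$, which is exactly your ``alternative'' scaling route. Your Pohozaev-identity version is the rigorous counterpart of that computation, since the paper tacitly treats $\zeta_u=\frac N2 u+x\cdot\nabla u$ as an element of $T_uS_c$, and that requires regularity of $u$; note, however, that in Step 3 your Pohozaev version does not cover $\nu=\frac{1}{2s}$, where $(-\Delta)^s$ drops out of the auxiliary equation.
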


\begin{proof}
Define
\[
G:S_c\to\mathbb{R},
\qquad
G(u)=P_\alpha(u).
\]
Then
\[
\mathfrak{P}_{\alpha,c}=G^{-1}(0).
\]
Let \(u\in \mathfrak{P}_{\alpha,c}\), and set
\[
\zeta_u=\frac{d}{dt}\Big|_{t=0}(t\star u)\in T_uS_c.
\]
Since \(u\in \mathfrak{P}_{\alpha,c}\), one has
\[
E_u''(0)=dG(u)[\zeta_u].
\]
Moreover,
\[
E_u''(0)
=
2s^2\|u\|^2-2ps^2\gamma_{p,s}^2\int_{\mathbb{R}^N}(I_\mu*|u|^p)|u|^p\,dx.
\]
Using \(P_\alpha(u)=0\), this becomes
\[
E_u''(0)=2s^2(1-p\gamma_{p,s})\|u\|^2<0.
\]
Hence \(dG(u)\neq 0\) on \(T_uS_c\), so \(0\) is a regular value of \(G\). Therefore \(\mathfrak{P}_{\alpha,c}\) is a \(C^1\) submanifold of \(S_c\) of codimension \(1\).

Now let \(u\in \mathfrak{P}_{\alpha,c}\) be a critical point of \(J_\alpha|_{\mathfrak{P}_{\alpha,c}}\). Since \(\mathfrak{P}_{\alpha,c}\subset S_c\) is given by the single constraint \(G(u)=0\), there exists \(\nu\in\mathbb{R}\) such that
\[
d(J_\alpha|_{S_c})(u)=\nu\, dG(u).
\]
Evaluating both sides at \(\zeta_u\in T_uS_c\), we obtain
\[
0=d(J_\alpha|_{S_c})(u)[\zeta_u]
=
\nu\, dG(u)[\zeta_u]
=
\nu\, E_u''(0).
\]
Since \(E_u''(0)\neq 0\), it follows that \(\nu=0\). Hence
\[
d(J_\alpha|_{S_c})(u)=0.
\]
Therefore \(\mathfrak{P}_{\alpha,c}\) is a natural constraint for \(J_\alpha\) on \(S_c\).
\end{proof}

\begin{proof}[Proof of Theorem~\ref{Thm1.2}]
By Lemmas~\ref{Lem3.6}, \ref{Lem3.7}, and \ref{Lem4.2}, there exists
\[
u_c\in H^s_{\mathrm{rad}}(\mathbb{R}^N)\setminus\{0\}
\]
such that, up to a subsequence,
\[
u_n\to u_c
\qquad\text{strongly in }H^s_{\mathrm{rad}}(\mathbb{R}^N).
\]
Since \(u_n\in \mathfrak P_{\alpha,c}\cap S_{c,\mathrm{rad}}\) for every \(n\), the strong convergence yields
\[
u_c\in \mathfrak P_{\alpha,c}\cap S_{c,\mathrm{rad}}.
\]
Moreover,
\[
J_\alpha(u_c)=\lim_{n\to\infty}J_\alpha(u_n)=m_1(c,\alpha).
\]
Hence \(u_c\) is a minimizer of \(J_\alpha\) on \(\mathfrak P_{\alpha,c}\cap S_{c,\mathrm{rad}}\). By Lemma~\ref{Lem3.2}, it is in fact a minimizer of \(J_\alpha\) on \(\mathfrak P_{\alpha,c}\).

Since
\[
\|\,|u_c|\,\|=\|u_c\|,
\qquad
\||u_c|\|_2=\|u_c\|_2,
\]
and the nonlocal terms depend only on \(|u_c|\), we have
\[
|u_c|\in \mathfrak P_{\alpha,c}\cap S_{c,\mathrm{rad}}
\]
and
\[
J_\alpha(|u_c|)=J_\alpha(u_c)=m_1(c,\alpha).
\]
Thus we may assume that \(u_c\) is nonnegative and radially symmetric.

By Proposition~\ref{Pro4.1}, \(\mathfrak P_{\alpha,c}\) is a natural constraint for \(J_\alpha\) on \(S_c\). Therefore \(u_c\) is a constrained critical point of \(J_\alpha\) on \(S_c\), and \((u_c,\lambda_c)\) is a nonnegative radially symmetric normalized ground state solution of \eqref{eq1.1}. The asymptotic formula for \(\lambda_c\) follows from \eqref{eq4.4}.
\end{proof}

\begin{Lem}\label{Lem4.3}
Let \(\{u_n\}\), \(u_c\), and \(\lambda_c\) be given by Lemma~\ref{Lem3.6}. Then there exists \(\tau>0\) such that the condition of Lemma~\ref{Lem3.7} holds whenever
\[
\mathcal M_1(c)>\tau.
\]
Moreover,
\begin{equation}\label{eq4.7}
\left|
-\lambda_c c^2
-\alpha S_\mu^{-\frac{2N-\mu}{N}}c^b
\right|
\lesssim c^a.
\end{equation}
\end{Lem}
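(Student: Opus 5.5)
The plan is to prove \eqref{eq4.7} first, by sharpening the computation in Lemma~\ref{Lem4.2}, and then to check the inequality of Lemma~\ref{Lem3.7} by comparing leading terms. Since \(\mathcal M_1(c)=c^{-a}\) with \(a<0\), the regime \(\mathcal M_1(c)>\tau\) means \(c\) is large, so \(c^a\to0\) while \(c^b\to\infty\).

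\textbf{Step 1: the estimate \eqref{eq4.7}.} Write \(A_*(u_n)=\int_{\mathbb{R}^N}(I_\mu*|u_n|^{\frac{2N-\mu}{N}})|u_n|^{\frac{2N-\mu}{N}}dx\). From the proof of Lemma~\ref{Lem4.1},
\[
J_\alpha(u_n)=\Bigl(\tfrac12-\tfrac1{2p\gamma_{p,s}}\Bigr)\|u_n\|^2-\tfrac{\alpha N}{2(2N-\mu)}A_*(u_n)\to m_1(c,\alpha).
\]
Subtracting this from \(m_1(c,\alpha)+A_\mu c^b\) gives
\[
\tfrac{\alpha N}{2(2N-\mu)}\Bigl(S_\mu^{-\frac{2N-\mu}{N}}c^b-A_*(u_n)\Bigr)=m_1(c,\alpha)+A_\mu c^b-\Bigl(\tfrac12-\tfrac1{2p\gamma_{p,s}}\Bigr)\|u_n\|^2+o_n(1).
\]
By \eqref{eq4.1} and \eqref{eq4.2} the right-hand side is \(O(c^a)\). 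Hence \(0\le \alpha S_\mu^{-\frac{2N-\mu}{N}}c^b-\alpha A_*(u_n)\lesssim c^a\), uniformly for large \(n\). Insert this into \eqref{eq4.5}, which reads \(-\lambda_nc^2=\alpha A_*(u_n)+(\frac1{\gamma_{p,s}}-1)\|u_n\|^2+o_n(1)\). Using again \(\|u_n\|^2\sim c^a\) and letting \(n\to\infty\) yields \eqref{eq4.7}. As a consequence,
\[
-\lambda_c=\alpha S_\mu^{-\frac{2N-\mu}{N}}c^{b-2}\bigl(1+O(c^{a-b})\bigr),\qquad b-2=\tfrac{2(N-\mu)}{N}>0,
\]
so \(-\lambda_c\to\infty\) as \(c\to\infty\).

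\textbf{Step 2: expanding both sides of Lemma~\ref{Lem3.7}.} For the left-hand side, combine \eqref{eq4.2} with \eqref{eq4.7}:
\[
m_1(c,\alpha)-\tfrac{\lambda_c}{2}c^2=\Bigl(\tfrac12-\tfrac{N}{2(2N-\mu)}\Bigr)\alpha S_\mu^{-\frac{2N-\mu}{N}}c^b+O(c^a)=\tfrac{N-\mu}{2(2N-\mu)}\alpha S_\mu^{-\frac{2N-\mu}{N}}c^b+O(c^a).
\]
For the threshold term, substitute the expansion of \(-\lambda_c\). A direct exponent count shows that the powers of \(\alpha\) and \(S_\mu\) combine exactly, giving
\[
\alpha^{-\frac{N}{N-\mu}}(-\lambda_cS_\mu)^{\frac{2N-\mu}{N-\mu}}=\alpha S_\mu^{-\frac{2N-\mu}{N}}c^b\bigl(1+O(c^{a-b})\bigr)=\alpha S_\mu^{-\frac{2N-\mu}{N}}c^b+O(c^a).
\]
So the leading \(c^b\) terms on the two sides cancel exactly. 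The condition of Lemma~\ref{Lem3.7} therefore reduces to showing
\[
m_2(c,\alpha)>Kc^a \quad\text{for large }c,
\]
where \(K\) is a fixed constant coming from the \(O(c^a)\) remainders. Those remainders have an unfavourable sign, since \(m_1(c,\alpha)+A_\mu c^b\gtrsim c^a\) by \eqref{eq4.2}.

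\textbf{Step 3 (main obstacle): a lower bound for \(m_2(c,\alpha)\) that beats \(c^a\).} I would adapt the argument of Lemma~\ref{Lem4.2}, now using that \(-\lambda_c\) is large. For \(u\in\mathcal N_c\) keep \(B(u)=\|u\|^2-\lambda_c\|u\|_2^2=A_p(u)+\alpha A_*(u)\) and \(I_\alpha(u)\ge C_0B(u)\). Use \(\|u\|^2\le B(u)\) and \(\|u\|_2^2\le B(u)/(-\lambda_c)\), together with Lemma~\ref{Lem2.2} and \eqref{eq2.5}, to get
\[
B\le C_p(-\lambda_c)^{-p(1-\gamma_{p,s})}B^{p}+\alpha S_\mu^{-\frac{2N-\mu}{N}}(-\lambda_c)^{-\frac{2N-\mu}{N}}B^{\frac{2N-\mu}{N}}.
\]
Both exponents of \(B\) exceed \(1\), and \(-\lambda_c\sim c^{b-2}\to\infty\). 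Hence \(B(u)\ge\rho(c)\), where \(\rho(c)\) is the minimum of two positive powers of \(-\lambda_c\); in particular \(\rho(c)\to\infty\). This gives \(m_2(c,\alpha)\ge C_0\rho(c)\gg c^a\) for large \(c\), which fixes \(\tau\) and completes the verification.

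The delicate point is the bookkeeping in Step 2. The leading terms cancel exactly, so every \(O(c^a)\) error, including the one coming from linearising \(x\mapsto x^{\frac{2N-\mu}{N-\mu}}\) in the expansion of \(-\lambda_c\), must be tracked uniformly in \(c\). The lower bound on \(m_2\) from Step 3 must then be checked to genuinely dominate these errors.
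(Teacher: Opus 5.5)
Your proposal is correct and follows essentially the same route as the paper. You derive the estimate for $-\lambda_c c^2$ from the Pohozaev identity and Lemma~\ref{Lem4.1}, note that the leading $c^b$ terms on the two sides of the condition in Lemma~\ref{Lem3.7} cancel, leaving an $O(c^a)$ remainder that tends to $0$ as $c\to\infty$, and then absorb this remainder with a lower bound on $m_2(c,\alpha)$.

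The only difference is in Step~3. The paper uses $-\lambda_c\ge 1$ to obtain a $c$-independent bound $m_2(c,\alpha)\ge\eta>0$, which already suffices because $c^a\to 0$. Your use of $\|u\|_2^2\le B(u)/(-\lambda_c)$ gives the stronger bound $m_2(c,\alpha)\to\infty$, which is valid but not needed.
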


\begin{proof}
Since \(\mathcal M_1(c)=c^{-a}\) and \(a<0\), the condition \(\mathcal M_1(c)>\tau\) means that \(c\) is sufficiently large.

Set
\[
Q_n
=
\int_{\mathbb{R}^N}(I_\mu*|u_n|^{\frac{2N-\mu}{N}})|u_n|^{\frac{2N-\mu}{N}}\,dx.
\]
From \(P_\alpha(u_n)=0\) and \(J_\alpha(u_n)\to m_1(c,\alpha)\), we have
\[
m_1(c,\alpha)
=
\left(\frac{1}{2}-\frac{1}{2p\gamma_{p,s}}\right)\|u_n\|^2
-\frac{\alpha N}{2(2N-\mu)}Q_n
+o_n(1).
\]
Hence
\[
m_1(c,\alpha)+A_\mu c^b
=
\left(\frac{1}{2}-\frac{1}{2p\gamma_{p,s}}\right)\|u_n\|^2
+\frac{\alpha N}{2(2N-\mu)}
\left(
S_\mu^{-\frac{2N-\mu}{N}}c^b-Q_n
\right)
+o_n(1).
\]
By Lemma~\ref{Lem4.1}, both \(m_1(c,\alpha)+A_\mu c^b\) and \(\|u_n\|^2\) are of order \(c^a\). Therefore, for \(n\) sufficiently large,
\begin{equation}\label{eq4.8}
Q_n
=
S_\mu^{-\frac{2N-\mu}{N}}c^b
+
O(c^a).
\end{equation}

Now, taking \(v=u_n\) in \eqref{eq3.16}, using \(P_\alpha(u_n)=0\), we get
\[
-\lambda_n c^2
=
\alpha Q_n
+\left(\frac{1}{\gamma_{p,s}}-1\right)\|u_n\|^2
+o_n(1).
\]
Combining this with Lemma~\ref{Lem4.1} and \eqref{eq4.8}, we infer
\[
-\lambda_n c^2
=
\alpha S_\mu^{-\frac{2N-\mu}{N}}c^b
+
O(c^a).
\]
Passing to the limit, we obtain \eqref{eq4.7}.

We next prove a uniform positive lower bound for \(m_2(c,\alpha)\) in the present large-mass regime. By \eqref{eq4.7},
\[
-\lambda_c c^2
=
\alpha S_\mu^{-\frac{2N-\mu}{N}}c^b
+
O(c^a).
\]
Since \(a<b\), it follows that
\[
-\lambda_c c^2\sim c^b.
\]
Hence
\[
-\lambda_c\sim c^{b-2}.
\]
Because
\[
b=\frac{2(2N-\mu)}{N}>2,
\]
we obtain \(-\lambda_c\to\infty\) as \(c\to\infty\). Therefore there exists \(\tau_0>0\) such that
\[
\mathcal M_1(c)>\tau_0
\quad\Longrightarrow\quad
-\lambda_c\ge 1.
\]

For \(u\in\mathcal N_c\), define
\[
A_p(u)
=
\int_{\mathbb{R}^N}(I_\mu*|u|^p)|u|^p\,dx,
\qquad
A_*(u)
=
\int_{\mathbb{R}^N}(I_\mu*|u|^{\frac{2N-\mu}{N}})|u|^{\frac{2N-\mu}{N}}\,dx,
\]
and
\[
B(u)=\|u\|^2-\lambda_c\|u\|_2^2.
\]
Since \(u\in\mathcal N_c\), one has
\[
B(u)=A_p(u)+\alpha A_*(u).
\]
Moreover, if \(\mathcal M_1(c)>\tau_0\), then \(-\lambda_c\ge 1\), and therefore
\[
B(u)=\|u\|^2+(-\lambda_c)\|u\|_2^2\ge \|u\|^2+\|u\|_2^2.
\]
In particular,
\[
\|u\|^2\le B(u),
\qquad
\|u\|_2^2\le B(u).
\]

By Lemma~\ref{Lem2.2},
\[
A_p(u)
\le
C_p\|u\|^{2p\gamma_{p,s}}\|u\|_2^{2p(1-\gamma_{p,s})}
\le
C_p B(u)^{p\gamma_{p,s}}.
\]
By \eqref{eq2.5},
\[
A_*(u)
\le
S_\mu^{-\frac{2N-\mu}{N}}\|u\|_2^{\frac{2(2N-\mu)}{N}}
\le
S_\mu^{-\frac{2N-\mu}{N}} B(u)^{\frac{2N-\mu}{N}}.
\]
Hence
\[
B(u)
=
A_p(u)+\alpha A_*(u)
\le
C_1 B(u)^{p\gamma_{p,s}}
+
C_2 B(u)^{\frac{2N-\mu}{N}},
\]
where \(C_1,C_2>0\) are independent of \(u\) and \(c\), provided \(\mathcal M_1(c)>\tau_0\).

Since
\[
p\gamma_{p,s}>1,
\qquad
\frac{2N-\mu}{N}>1,
\]
there exists \(\rho>0\), independent of \(u\) and \(c\), such that
\[
B(u)\ge \rho
\qquad\text{for all }u\in\mathcal N_c
\]
whenever \(\mathcal M_1(c)>\tau_0\).

Next, using \(B(u)=A_p(u)+\alpha A_*(u)\), we compute
\[
\begin{aligned}
I_\alpha(u)
&=
\frac{1}{2}B(u)
-\frac{1}{2p}A_p(u)
-\frac{\alpha N}{2(2N-\mu)}A_*(u) \\
&=
\left(\frac{1}{2}-\frac{1}{2p}\right)A_p(u)
+
\frac{N-\mu}{2(2N-\mu)}\alpha A_*(u).
\end{aligned}
\]
Hence
\[
I_\alpha(u)\ge C_0 B(u),
\]
where
\[
C_0=
\min\left\{
\frac{1}{2}-\frac{1}{2p},
\frac{N-\mu}{2(2N-\mu)}
\right\}>0.
\]
Therefore,
\[
m_2(c,\alpha)
=
\inf_{u\in\mathcal N_c}I_\alpha(u)
\ge
C_0\rho
= \eta >0
\]
whenever \(\mathcal M_1(c)>\tau_0\).

We now verify the condition in Lemma~\ref{Lem3.7}. By \eqref{eq4.2} and \eqref{eq4.7},
\[
m_1(c,\alpha)-\frac{1}{2}\lambda_c c^2
=
\frac{N-\mu}{2(2N-\mu)}
\alpha S_\mu^{-\frac{2N-\mu}{N}}c^b
+
O(c^a).
\]
On the other hand, \eqref{eq4.7} gives
\[
-\lambda_c c^2
=
\alpha S_\mu^{-\frac{2N-\mu}{N}}c^b\bigl(1+O(c^{a-b})\bigr).
\]
Since \(a-b<0\), one has \(c^{a-b}\to0\) as \(c\to\infty\). Therefore,
\[
-\lambda_c
=
\alpha S_\mu^{-\frac{2N-\mu}{N}}c^{b-2}\bigl(1+O(c^{a-b})\bigr),
\]
and consequently
\[
\alpha^{-\frac{N}{N-\mu}}
(-\lambda_c S_\mu)^{\frac{2N-\mu}{N-\mu}}
=
\alpha S_\mu^{-\frac{2N-\mu}{N}}c^b
+
O(c^a).
\]
It follows that
\[
\frac{N-\mu}{2(2N-\mu)}
\alpha^{-\frac{N}{N-\mu}}
(-\lambda_c S_\mu)^{\frac{2N-\mu}{N-\mu}}
-
\left(
m_1(c,\alpha)-\frac{1}{2}\lambda_c c^2
\right)
=
O(c^a).
\]
Since \(a<0\), this error tends to \(0\) as \(c\to\infty\). Using the above lower bound
\[
m_2(c,\alpha)\ge \eta>0,
\]
we can choose \(\tau\ge \tau_0\) large enough such that
\[
m_1(c,\alpha)-\frac{\lambda_c}{2}c^2
<
m_2(c,\alpha)
+
\frac{N-\mu}{2(2N-\mu)}
\alpha^{-\frac{N}{N-\mu}}
(-\lambda_c S_\mu)^{\frac{2N-\mu}{N-\mu}}
\]
whenever \(\mathcal M_1(c)>\tau\). This is exactly the condition of Lemma~\ref{Lem3.7}.
\end{proof}

\begin{proof}[Proof of Theorem~\ref{Thm1.3}]
By Lemmas~\ref{Lem3.6}, \ref{Lem3.7}, and \ref{Lem4.3}, there exists
\[
u_c\in H^s_{\mathrm{rad}}(\mathbb{R}^N)\setminus\{0\}
\]
such that, up to a subsequence,
\[
u_n\to u_c
\qquad\text{strongly in }H^s_{\mathrm{rad}}(\mathbb{R}^N).
\]
Since \(u_n\in \mathfrak P_{\alpha,c}\cap S_{c,\mathrm{rad}}\) for every \(n\), the strong convergence yields
\[
u_c\in \mathfrak P_{\alpha,c}\cap S_{c,\mathrm{rad}}.
\]
Moreover,
\[
J_\alpha(u_c)=\lim_{n\to\infty}J_\alpha(u_n)=m_1(c,\alpha).
\]
Hence \(u_c\) is a minimizer of \(J_\alpha\) on \(\mathfrak P_{\alpha,c}\cap S_{c,\mathrm{rad}}\). By Lemma~\ref{Lem3.2}, it is in fact a minimizer of \(J_\alpha\) on \(\mathfrak P_{\alpha,c}\).

Since
\[
\|\,|u_c|\,\|=\|u_c\|,
\qquad
\||u_c|\|_2=\|u_c\|_2,
\]
and the nonlocal terms depend only on \(|u_c|\), we have
\[
|u_c|\in \mathfrak P_{\alpha,c}\cap S_{c,\mathrm{rad}}
\]
and
\[
J_\alpha(|u_c|)=J_\alpha(u_c)=m_1(c,\alpha).
\]
Thus we may assume that \(u_c\) is nonnegative and radially symmetric.

By Proposition~\ref{Pro4.1}, \(\mathfrak P_{\alpha,c}\) is a natural constraint for \(J_\alpha\) on \(S_c\). Therefore \(u_c\) is a constrained critical point of \(J_\alpha\) on \(S_c\), and \((u_c,\lambda_c)\) is a nonnegative radially symmetric normalized ground state solution of \eqref{eq1.1}.

The expansion
\[
\left|
-\lambda_c c^2-\alpha S_\mu^{-\frac{2N-\mu}{N}}c^b
\right|
\lesssim c^a
\]
follows from \eqref{eq4.7}, while \eqref{eq4.2} gives
\[
m_1(c,\alpha)+A_\mu c^b\sim c^a.
\]
This completes the proof.
\end{proof}

\section*{Acknowledgments}
We would like to thank the anonymous referee for his/her careful readings of our manuscript and the useful comments. 

\medskip
{\bf Funding:} This work is supported by National Natural Science Foundation of China (12301145, 12261107, 12561020) and Yunnan Fundamental Research Projects (202301AU070144, 202401AU070123).

\medskip
{\bf Author Contributions:} All the authors wrote the main manuscript text together and these authors contributed equally to this work.

\medskip
{\bf Data availability:}  Data sharing is not applicable to this article as no new data were created or analyzed in this study.

\medskip
{\bf Conflict of Interests:} The authors declare that there is no conflict of interest.

\bibliographystyle{plain}
\bibliography{ref}

\end{document}